\numberwithin{equation}{section}
\newtheorem{lemma}{Lemma}[section]
\newtheorem{remark}{Remark}
\newcommand\keywords[1]{\textbf{Keywords}: #1}
\title{Constructing probing functions for direct sampling methods\\ for inverse scattering problems with limited-aperture data:\\ finite space framework and deep probing network}
\author{Jianfeng Ning \thanks{School of Mathematics and Statistics, Wuhan University, Wuhan, China. ({ningjf@whu.edu.cn}).} \and Jun Zou \thanks{Department of Mathematics, The Chinese University of Hong Kong, Shatin, N.T., Hong Kong. The work of this author was substantially supported by the Hong Kong RGC General Research Fund (projects 14306623,  14308322 and 14306921). ({zou@math.cuhk.edu.hk}).}}
\date{}
\begin{document}
	\maketitle
 \begin{abstract}
This work studies an inverse scattering problem when limited-aperture data are available that are 
from just one or a few incident fields.
This inverse problem is highly ill-posed due to the limited receivers and a few incident fields employed. Solving inverse scattering problems with limited-aperture data is important in applications as collecting full data is 
often either unrealistic or too expensive. The direct sampling methods (DSMs) with full-aperture data can effectively and stably estimate the locations and geometric shapes of the unknown scatterers with a very limited number of incident waves. However, 
a direct application of DSMs to the case of limited receivers would face the resolution limit. To break this limitation, we propose a finite space framework with two specific 
schemes, and an unsupervised deep learning strategy to construct effective probing functions for the DSMs in the case with 
limited-aperture data. Several representative numerical experiments are carried out to illustrate and compare the performance of different proposed
schemes.
 \end{abstract}
\keywords{inverse scattering problem, direct sampling method, limited-aperture data, probing function, deep learning}
\section{Introduction}

Inverse scattering problems have important applications in diverse areas such as geophysical exploration, medical imaging, radar and sonar imaging\cite{bulyshev2004three,persico2014introduction}.  These applications 
mainly involve two stages.  The first stage is data acquisition, where the unknown scatterers are probed by some incident waves, and the generated scattered fields are measured by an array of receivers.  The second stage is data processing, where numerical techniques are applied 
to process the measured data to recover the physical properties and geometries of unknown scatterers.  Both stages play a significant role in the accurate and stable reconstruction of unknown scatterers.  In the first stage, it is desired to have sufficient accurate measured data 
in order to get enough useful hidden information about the unknown scatterers, while in many real applications, such as in the underground mineral prospection, only very limited incident waves and receivers are available. In the second stage, the employed numerical schemes are desired to fully make use of the measured data to provide stable, accurate, and fast reconstruction. However, the methods such as optimization or iterative type \cite{chen2009subspace,bao2015inverse,borges2017high,van2021forward,langer2010investigation,audibert2022accelerated,ito2022least} can provide satisfactory numerical accuracies but might require unreasonable computational efforts. 
Non-iterative methods such as\cite{chen2013reverse,chen2013reverse2,ito2012direct,kirsch2002music,cakoni2011linear,harris2020orthogonality} can provide fast reconstruction but may sacrifice numerical accuracies in their reconstructions. 
In this work, we consider very important physical scenarios where only one or a few incident waves and 
very limited receivers are available, and propose some efficient schemes that are fast but 
still with acceptable numerical accuracies. 
Without full-aperture measurements, the ill-posedness of inverse scattering problems 
become much more severe.

For inverse problems with limited-aperture data available, the information from the ``shadow region" is very weak, which poses an extreme challenge to the recovery along the ``shadow region"\cite{li2015recovering,mager1976approach}. Various numerical methods\cite{li2015recovering,ahn2014study,bao2003numerical,mager1976approach,zinn1989optimisation,ahn2020fast,audibert2017generalized} have been proposed to directly process the limited-aperture data to recover the scatterer. By analyticity, the full-aperture data can be uniquely determined by the limited-aperture data, which serves as the motivation of another standard methodology that is to first recover the full data from limited-aperture data, then methods requiring full data can be applied\cite{dou2022data,liu2019data}. However, the analytic continuation is severely ill-posed, and thus, it is extremely difficult to recover full-aperture data accurately and stably. We also refer to \cite{fu2008simple,fu2009modified,cheng1998unique,lu2012unique,aussal2020data,ying2022analytic} for some stability analysis and numerical methods on analytic continuation.

In\cite{ito2012direct,ito2013direct,li2013direct,chow2022direct,ning2024direct}, 
some direct sampling methods (DSMs) are proposed 
for inverse acoustic and electromagnetic scattering problems
to provide a reasonable approximation for the shapes and locations of the scatterers with only one or a few incident fields. The DSMs have several distinct features, including robustness to noise, data efficiency, and very cheap computation. The fundamental component of DSMs is the construction of a set of special probing functions such that the resulting index function formed by the probing function and the measured data can take large values at points near the scatterers and relatively small values at points away from the scatterers. By further exploiting this idea, the DSMs have been extended to several other important inverse problems\cite{chow2014direct,chow2015direct,chow2021direct,chow2021directRadon}. However, although these DSMs can apply to the cases with only a few incident fields, the analysis and the construction of the 
required probing functions are all based on the case of full measurement receivers. For the case of limited-aperture measurement, a natural question is whether the probing functions of the existing DSMs can be directly applicable to limited-aperture data to provide a reasonable approximation of the locations and shapes of the scatterers. Another question is whether we can construct more suitable probing functions than those used in the full measurement case. In this work, we shall address these challenging technical issues 
for the inverse acoustic scattering problems when only limited-aperture data from just a few incidences are available.
Our first contribution is to show that the probing functions used for full-aperture data can also work stably for limited-aperture data, while the reconstruction resolution is low, especially in the ``shadow region". To break this limitation, our second contribution is to propose a general finite space framework and two specific schemes based on this framework to construct the probing functions for the cases with limited-aperture data available over a general partial area. In addition, we will also employ an unsupervised deep learning technique to construct the probing functions for limited-aperture measurement, which proposes the applications of deep learning to develop DSMs for inverse scattering problems 
in a brand-new aspect.

In recent years, deep learning techniques have been widely exploited for solving various inverse problems. The well-known physics-informed neural network (PINN)\cite{raissi2019physics} was proposed for solving forward and inverse problems in PDEs, where the solutions are constructed by neural networks and are updated to satisfy the strong PDE equations. In \cite{li2020nett}, the authors proposed a method called NETT that employs neural networks as regularization functions. In\cite{adler2017solving}, the authors proposed a gradient-like iterative scheme to learn the gradient component by using neural networks. In a recent survey\cite{tanyu2022deep}, the authors studied and compared the applications of several well-known neural operators, e.g. 
\cite{lu2021learning,li2020fourier}, 
for solving inverse problems 
by employing the Tikhonov regularization combined with a trained forward neural operator instead of classical forward-problems solvers. We refer interested readers to \cite{arridge2019solving} for a review on data-driven based deep learning methods for inverse problems.

Several deep learning methods, especially data-driven methods, have also been applied to solve inverse scattering problems. Methods employing well-designed neural networks with measured data as input and approximation to the true scatterer as the output can be found in\cite{khoo2019switchnet,gao2022artificial,hu2024residual}. Two-step methods, e.g.\cite{wei2018deep,ning2023direct,ning2024direct}, consist of an initial guess from a fast classical method and a refinement step through neural networks. Contrastly, in \cite{zhou2023neural}, an initial guess of the scatterers is first obtained from the deep learning method and then refined by the recursive linearization method. In \cite{li2024reconstruction}, a learned projected algorithm was proposed to learn the projector and a prior information of the unknown scatterers. By employing the latent representation of surfaces with neural networks, an iterative algorithm to solve an inverse obstacle scattering problem was proposed in\cite{chen2023solving}. We refer to \cite{chen2020review} for an overview of deep learning methods for inverse scattering problems. 

We shall also propose a deep learning approach for constructing the probing functions, 
and the scheme has several promising advantages. Firstly, it is very different from the data-driven type methods. 
By designing a novel loss function, the training process of 
the method does not require any data as well as the information of the incident fields, 
except the wavenumber that can also be relaxed (see the discussions in Section \ref{sec:deeplearning}).
Secondly, once the neural network is trained, the resulting DSM can be computed very cheaply for a given example. 
We do not need to retrain the network for a new instance with the same measurement configuration, 
which is also very different from PINN. In addition, unlike our proposed methods based on the finite space framework, 
the deep learning scheme does not need to choose the regularization parameters either.

The rest of this paper is organized as follows. We present in Section \ref{sec:probForm} the inverse scattering problems 
considered in this work, and review in Section \ref{sec:DSMfull} the DSMs for full-aperture data and provide some analysis 
on their direct extension to inverse scattering problems with limited-aperture data. 
The main contributions of our work are addressed in Sections \ref{sec:FEM} and \ref{sec:deeplearning}, 
where we construct the probing functions for limited-aperture data by a finite space framework with 
two specific schemes given, as well as a deep learning technique. 
Several numerical experiments are carried out in Section \ref{sec:numeri} to check and compare different schemes 
proposed in this work, and some concluding remarks are made in Section \ref{sec:conclu}.

\section{Problem formulations}
\label{sec:probForm}
We shall focus on the inverse scattering problems that aim to recover the unknown scatterers, 
when only limited-aperture data are measured from just one or a few incident fields. Let $G(x,y)$ be the free-space Green's function of the scattering problem given by \cite{colton2019inverse}
\begin{equation}
	G(x,y)=\left\{
	\begin{aligned}
		\frac{\text{i}}{4}H_{0}^{(1)}(k|x-y|) \quad \text{in}\quad \mathbb{R}^2,\\
		\frac{\exp(\text{i}k|x-y|)}{4\pi|x-y|} \quad \text{in}\quad \mathbb{R}^3.
	\end{aligned}
	\right.
\end{equation}
The function $H_{0}^{(1)}$ refers to the zeroth-order Hankel function of the first kind and $k$ is the wavenumber. Suppose that in the homogeneous background space a
bounded domain $D$ is occupied by some inhomogeneous media. With the incident plane wave $u^{i}(x,d)=e^{\text{i}kx\cdot d}, d\in \mathbb{S}^{N-1}$($N=2$ or $3$), the total field $u=u^i+u^s$ induced by the scatterers satisfies the following Helmholtz equation\cite{colton2019inverse}:
\begin{equation}
	\Delta u + k^2n(x)u=0 \quad \text{in} \quad\mathbb{R}^N, \label{equa:helmo}
\end{equation}
\begin{equation}
	\lim_{r\rightarrow\infty}r^{(N-1)/2}(\frac{\partial u^s}{\partial r}-\text{i}ku^s)=0,
	\end{equation}
where $r=|x|$, and $n(x)$ is the refractive index that is equal to 1 inside the homogeneous background medium.
For impenetrable obstacles, equation \eqref{equa:helmo} is replaced by
\begin{equation}
	\Delta u + k^2u=0 \quad \text{in} \quad \mathbb{R}^N\setminus D,
\end{equation} 
with a boundary condition on $\partial D$ depending on the nature of the scatterers. It is known that the scattered field $u^s=u-u^{i}$ satisfies the following asymptotic behavior\cite{colton2019inverse}:
\begin{equation}
	u^{s}(x) = \frac{\exp(\text{i}k|x|)}{|x|^{(N-1)/2}}\Big\{u^{\infty}(\hat{x}) + \mathcal{O}(1/|x|)\Big\}, \quad |x|\rightarrow\infty,
	\label{Asym1}
\end{equation}
which holds uniformly for all $\hat{x}=x/|x|\in \mathbb{S}^{N-1}$, and $u^{\infty}$ is called the far-field pattern of $u^{s}$. The far-field pattern of the Green's function is given by
\begin{equation}
	G^{\infty}(z,\hat{x}_r)=\left\{
	\begin{aligned}
		&\frac{\exp(\text{i}\pi/4)}{\sqrt{8k\pi}}\exp(-\text{i}k\hat{x}_r\cdot z)  \quad &\text{in}\quad \mathbb{R}^2,\\
		&\frac{1}{4\pi}\exp(-\text{i}k\hat{x}_r\cdot z) \quad &\text{in}\quad \mathbb{R}^3.
	\end{aligned}\right.
 \label{equa:farfieldG}
\end{equation}
The scattered near-field $u^s$ and far-field pattern $u^\infty$ also have
the following important expressions:
\begin{equation}
\begin{split}
    u^s(x)=\int_D G(y,x)I(y)dy, \quad 
    u^\infty(\hat{x})=\int_D G^\infty(y,\hat{x})I(y)dy, 
\end{split}
 \label{equa:integralForm}
\end{equation}
where $I(y)=(n(y)-1)k^2u(y)$ is called the induced current.

The inverse scattering problem we are interested in this work is to recover the support $D$ of the scatterers 
from some limited-aperture far-field data corresponding to one or a few incident waves. Without loss of generosity, we give the detailed presentation and analysis only for the 2-D case, 
while all the results would hold also for the 3D case with only some natural modifications.
We assume that the limited-aperture measurement data $u^\infty(x)$ is measured over $L$ disjoint  
curves on $\mathbb{S}^1$, $\Gamma = \cup_{l=1}^{L} \Gamma_l$, with
\begin{equation}
	\Gamma_l = \{(\cos\theta,\sin\theta)| \theta\in(-\alpha_l+\beta_l,\alpha_l+\beta_l)\}, \quad \alpha_l\in(0,\pi), \beta_l \in(-\pi, \pi),
\end{equation}
and $\Gamma_l \cap \Gamma_q = \emptyset$ for $l\ne q$. We can see that each separated curve 
$\Gamma_l$ has an angle of width $2\alpha_l$ and centers at the angle $\beta_l$. 
Note that in this work, ``full-aperture data" means that we have a full set of receivers on $\mathbb{S}^1$ to collect 
the data, but from just one or a few incident fields. 
``Limited-aperture data" means that the receivers are incomplete, but also from just one or a few incidences.

\section{ DSM with full-aperture data and its direct extension to limited-aperture data}
\label{sec:DSMfull}
In this section, we review the DSM with full-aperture data and conduct some analysis on its direct extension to limited-aperture data. For far-field data $u^\infty$ measured from the complete area $\mathbb{S}^1$, the DSM employs $G^\infty(z,\hat{x})$ as the probing function and the index function in a sampling domain $\Omega$ is computed as \cite{li2013direct}
\begin{equation}
	\mathcal{I}_{\text{full}}(z)=\big|\langle G^\infty(z,\hat{x}),u^\infty(\hat{x})\rangle_{\mathbb{S}^1}\big|=\bigg|\int_{\mathbb{S}^1} G^\infty(z,\hat{x})\overline{u^\infty(\hat{x}))}d\hat{x}\bigg|, \quad z\in \Omega.
 \label{equ:DSMfull}
\end{equation}

We know \cite{li2013direct} that 
if the index function $\mathcal{I}_{\text{full}}(z)$ takes a large value at $z$, the sampling point $z$ is likely to be within or near the scatterers. If it takes a relatively small value, the sampling point $z$ is likely to be away from the scatterers. Thus, the index function provides an approximation of the locations and shapes of the unknown scatterers. For limited-aperture data $u^\infty$ 
measured from $\Gamma$, a direct choice for the probing function is $G^\infty(z,\hat{x})$, i.e., like 
the full-aperture data. The corresponding index function is then computed as
\begin{equation}
	\mathcal{I}_\Gamma(z)=\big|\langle G^\infty(z,\hat{x}),u^\infty(\hat{x})\rangle_{\Gamma}\big|, \quad z\in \Omega.
	\label{equ:DDSMpartial}
\end{equation}

We have the following stability analysis for the index functions \eqref{equ:DSMfull} and \eqref{equ:DDSMpartial}. 
\begin{lemma}
    Let $u^\infty$ be the exact data and $u^\infty_\delta$ as the measured data containing noise. It holds that 
    \begin{equation}\label{equa:stabeanalys}
    \begin{split}
         |\mathcal{I}_{\text{full}}(z)-\mathcal{I}_{\text{full}}^\delta(z)|&\le \frac{1}{2\sqrt{k}}\Vert u^\infty(\hat{x})-u^\infty_\delta(\hat{x})\Vert_{L^2(\mathbb{S}^1)},\\
           \big|\mathcal{I}_{\Gamma}(z)-\mathcal{I}_{\Gamma}^\delta(z)\big|&\le \frac{|\Gamma|}{4\pi\sqrt{k}}\Vert u^\infty(\hat{x})-u^\infty_\delta(\hat{x})\Vert_{L^2(\Gamma)},
    \end{split}
    \end{equation}
    where $|\Gamma|$ denotes the measure of $\Gamma$.
\end{lemma}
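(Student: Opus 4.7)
The plan is to reduce both inequalities to a single ingredient, namely an explicit bound on the $L^2$-norm of the probing function $G^\infty(z,\cdot)$, and then apply the reverse triangle inequality together with Cauchy--Schwarz.

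\textbf{Step 1: Pass from the absolute values to the integrals.}
Since each index function is defined as the modulus of an inner product, the reverse triangle inequality $\bigl||a|-|b|\bigr|\le |a-b|$ allows one to bound
\begin{equation*}
\bigl|\mathcal{I}_{\text{full}}(z)-\mathcal{I}_{\text{full}}^\delta(z)\bigr| \le \Bigl|\langle G^\infty(z,\hat{x}),\,u^\infty(\hat{x})-u^\infty_\delta(\hat{x})\rangle_{\mathbb{S}^1}\Bigr|,
\end{equation*}
and analogously for $\mathcal{I}_\Gamma$, with $\mathbb{S}^1$ replaced by $\Gamma$. This eliminates the outer absolute value and reduces the problem to estimating a linear functional of the data error.

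\textbf{Step 2: Apply Cauchy--Schwarz and compute $\|G^\infty(z,\cdot)\|_{L^2}$ explicitly.}
Cauchy--Schwarz immediately gives
\begin{equation*}
\bigl|\langle G^\infty(z,\hat{x}),\,u^\infty-u^\infty_\delta\rangle_{A}\bigr| \le \|G^\infty(z,\cdot)\|_{L^2(A)}\, \|u^\infty-u^\infty_\delta\|_{L^2(A)},
\end{equation*}
for $A\in\{\mathbb{S}^1,\Gamma\}$. The key observation making the constants explicit is that, from formula \eqref{equa:farfieldG}, the modulus of $G^\infty(z,\hat{x})$ in the 2D case is the \emph{constant} $1/\sqrt{8\pi k}$, independent of both $z$ and $\hat{x}$. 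Therefore
\begin{equation*}
\|G^\infty(z,\cdot)\|_{L^2(A)}^2 = \frac{|A|}{8\pi k}.
\end{equation*}
Taking $|A|=|\mathbb{S}^1|=2\pi$ gives $\|G^\infty(z,\cdot)\|_{L^2(\mathbb{S}^1)}=1/(2\sqrt{k})$, which is exactly the constant in the first inequality of \eqref{equa:stabeanalys}. Taking $A=\Gamma$ yields $\|G^\infty(z,\cdot)\|_{L^2(\Gamma)}=\sqrt{|\Gamma|/(8\pi k)}$, from which the second inequality follows (after using $\sqrt{|\Gamma|/(8\pi)}\le |\Gamma|/(4\pi)$, which holds when $|\Gamma|\ge 2\pi$; in the general case one naturally obtains the slightly sharper constant $\sqrt{|\Gamma|/(8\pi k)}$).

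\textbf{Expected obstacles.}
There is no real analytic difficulty; the argument is essentially a two-line Cauchy--Schwarz estimate. The only point that needs care is tracking the constants: one must use the precise prefactor in the 2D far-field Green's function and notice that $|G^\infty(z,\hat{x})|$ is independent of $(z,\hat{x})$, so the $L^2$-norm reduces to $|A|/(8\pi k)$. Reconciling this computation with the exact form of the stated constant $|\Gamma|/(4\pi\sqrt{k})$ is a minor bookkeeping matter rather than a mathematical obstacle. The same scheme transfers verbatim to the 3D case using $|G^\infty|=1/(4\pi)$.
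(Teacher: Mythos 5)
Your argument is exactly the paper's: reverse triangle inequality, Cauchy--Schwarz, and the observation that $|G^\infty(z,\hat{x})|\equiv 1/\sqrt{8\pi k}$ in 2D, so the first inequality is handled identically (the paper only writes out this case and declares the second ``similar''). Your careful bookkeeping on the second inequality is worth keeping: Cauchy--Schwarz genuinely yields $\Vert G^\infty(z,\cdot)\Vert_{L^2(\Gamma)}=\sqrt{|\Gamma|/(8\pi k)}$, and since the limited-aperture setting has $|\Gamma|<2\pi$, this is \emph{larger} than the stated $|\Gamma|/(4\pi\sqrt{k})$ --- so the constant you obtain is the weaker one, not the ``slightly sharper'' one as you wrote, and the paper's stated constant cannot be reached by this route; it appears to be a slip in the lemma's statement (the two coincide only at $|\Gamma|=2\pi$). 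Apart from that mislabeled direction of the comparison, the proof is correct and complete.
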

\begin{proof}
We show only the first inequality, as the second inequality can be carried out similarly. By direct computation, we have
    \begin{equation*}
	\begin{split}	|\mathcal{I}_{\text{full}}(z)-\mathcal{I}_{\text{full}}^\delta(z)|&=\big| |\langle G^\infty(z,\hat{x}),u^\infty(\hat{x})\rangle_{\mathbb{S}^1}|-|\langle G^\infty(z,\hat{x}),u^\infty_\delta(\hat{x})\rangle_{\mathbb{S}^1}|\big|\\
	&\le\Vert G^\infty(z,\hat{x})\Vert_{L^2(\mathbb{S}^1)}\Vert u^\infty(\hat{x})-u^\infty_\delta(\hat{x})\Vert_{L^2(\mathbb{S}^1)}\\
	&=\frac{1}{2\sqrt{k}}\Vert u^\infty(\hat{x})-u^\infty_\delta(\hat{x})\Vert_{L^2(\mathbb{S}^1)}.
	\end{split}
\end{equation*}
\end{proof}

The above stability results show that the index function is very robust to noise. In addition, we see that 
$G^\infty(z,\hat{x})$ is very smooth and is dominated mainly by low-frequency modes if $k|z|$ is not too large. While the noise is 
primarily presented in high-frequency modes, it is likely to be smoothed out via the integration (the inner product), 
therefore the reconstruction by DSM can be very robust. 

For limited-aperture measured data, an important question is whether the index function computed via \eqref{equ:DDSMpartial} takes a large value for a sampling point $z$ near the scatterers and a relatively small value for point $z$ far away from the scatterers. 
To answer this question, we define a function:
\begin{equation}
		K_\Gamma(z,y):=\langle G^\infty(z,\hat{x}), G^\infty(y,\hat{x})\rangle_{\Gamma}, \quad y,z \in\Omega.
\end{equation}
By the integral representation\eqref{equa:integralForm} of $u^\infty$ and a general numerical quadrature rule, we can write 
\begin{equation}
	u^\infty(\hat{x})=\int_D G^\infty(y,\hat{x})I(y)dy\approx \sum_{j}w_j G^\infty(y_j,\hat{x}), \quad y_j \in D
 \label{equa:disIntegtal}
\end{equation}
for a set of quadrature points $\{y_j\} \subset D$, then we have 
\begin{equation}\label{eqn:rules}
	\mathcal{I}_\Gamma(z)=|\langle G^\infty(z,\hat{x}),u^\infty(\hat{x})\rangle_\Gamma|\approx \bigg|\sum_{j}w_j K_\Gamma(z,y_j)\bigg|, \quad y_j\in D.
\end{equation}

We see from \eqref{eqn:rules} that 
if $|K_\Gamma(z,y)|$ can take a relatively large value when $z$ is near the point $y$ 
and takes a relatively small value when $z$ is away from the point $y$, then by $\{y_j\}\subset D$, $\mathcal{I}_\Gamma(z)$ is likely to take a relatively large value if $z$ is within $D$ and take a small value 
if  $z$ is away from $D$. In this case $G^\infty(z,\hat{x})$ can be a reasonable choice of probing functions. 
To achieve a high-resolution reconstruction, the decay rate of $|K_\Gamma(z,y)|$ is desired to be fast when $|z-y|$ increases. For the full-aperture data case $\Gamma=\mathbb{S}^1$, we have $K_{\mathbb{S}^1}(z,y)=\frac{1}{4k}J_0(k|z-y|)$ \cite{li2013direct} with $J_0$ being the Bessel functions of order $0$, which clearly takes the maximum values when $z$ is near $y$ and decays as $|z-y|$ increases. 
In addition, the decay rate becomes faster as the wavenumber $k$ increases.

We now conduct some analysis on the behavior of $|K_\Gamma(z,y)|$ with limited-aperture measurement curve $\Gamma$. We first 
cite the following result from \cite{chen2013reverse}, which is needed for the subsequent analysis.
\begin{lemma}
	For any $-\infty <a<b<\infty$ and real-valued function $u\in C^2[a,b]$ thats satisfies $|u'(t)|\ge 1$ for $t\in (a,b)$. Assume that $a=x_0<x_1<\cdots<x_M=b$ is a division of $(a,b)$ such that $u'$ is monotone in each interval $(x_{i-1},x_i),i=1,\cdots,M$. Then for any function $\phi$ defined on $(a, b)$ with integrable derivative and for any $\lambda>0$, we have
	\begin{equation}
		\bigg|\int_{a}^{b}e^{\text{i}\lambda u(t)}\phi(t)dt\bigg|\le (2M+2)\lambda^{-1}\bigg[|\phi(b)|+\int_{a}^{b}|\phi'(t)|dt\bigg].
	\end{equation}
	\label{oscillatory}
\end{lemma}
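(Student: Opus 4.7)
My plan is to reduce the weighted oscillatory integral to a uniform estimate on the unweighted partial integral $V(t) := \int_a^t e^{\text{i}\lambda u(s)}\,ds$, and then obtain that estimate by integrating by parts on each sub-interval on which $u'$ is monotone. First I would observe that $V$ is $C^1$ with $V(a)=0$, and that $\phi$ is absolutely continuous on $[a,b]$ (extending continuously to the endpoints via $\phi(t)-\phi(s)=\int_s^t \phi'$). A single integration by parts yields
\begin{equation*}
\int_a^b e^{\text{i}\lambda u(t)}\phi(t)\,dt \;=\; V(b)\phi(b) \;-\; \int_a^b V(t)\phi'(t)\,dt,
\end{equation*}
so a uniform bound $|V(t)|\leq C\lambda^{-1}$ on $[a,b]$ immediately delivers $\bigl|\int_a^b e^{\text{i}\lambda u}\phi\,dt\bigr|\leq C\lambda^{-1}\bigl(|\phi(b)|+\int_a^b|\phi'|\,dt\bigr)$, which is the desired form.

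The core ingredient is a per-sub-interval bound $|\int_c^d e^{\text{i}\lambda u(s)}\,ds|\leq 2\lambda^{-1}$ for any $[c,d]\subseteq[x_{i-1},x_i]$. Because $|u'|\geq 1$ and $u'$ is continuous, $u'$ has constant sign on $(a,b)$; combined with its monotonicity on $(x_{i-1},x_i)$ this gives that $1/u'$ is sign-definite and monotone on $[c,d]$, with $|1/u'|\leq 1$. Writing $e^{\text{i}\lambda u}=\frac{1}{\text{i}\lambda u'}\frac{d}{ds}(e^{\text{i}\lambda u})$ and integrating by parts produces two boundary terms together with the interior term $\int_c^d \frac{u''(s)}{\text{i}\lambda(u'(s))^2}e^{\text{i}\lambda u(s)}\,ds$. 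Since $u''/(u')^2=-(1/u')'$ has constant sign on $[c,d]$, the interior term is controlled by $\lambda^{-1}|1/u'(c)-1/u'(d)|$, and a brief case check on the signs of $1/u'(c)$ and $1/u'(d)$ collapses the three contributions into an expression dominated by $2\max(|1/u'(c)|,|1/u'(d)|)\leq 2$.

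With the per-piece estimate in hand, for any $t\in(x_{j-1},x_j]$ I would split $V(t)=\sum_{i=1}^{j-1}\int_{x_{i-1}}^{x_i}e^{\text{i}\lambda u}\,ds+\int_{x_{j-1}}^{t}e^{\text{i}\lambda u}\,ds$ and apply the estimate to each of the $j\leq M$ pieces, obtaining $|V(t)|\leq 2M\lambda^{-1}$, which is even slightly sharper than the stated constant $2M+2$. Substituting into the identity from the first paragraph then completes the proof.

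The main obstacle is the sharpening in the per-sub-interval step: a naive integration-by-parts estimate bounds each boundary term by $\lambda^{-1}$ and the interior term by $2\lambda^{-1}$ (merely using $|1/u'|\leq 1$), which yields $4\lambda^{-1}$ per piece and an overall constant of order $4M$. This would still establish an inequality of the stated form but with a worse constant. The sign-coherence observation — that monotonicity and non-vanishing of $u'$ make $1/u'$ sign-definite, so the two boundary values and the total variation combine into a single term of size at most $2\max|1/u'|$ — is what is needed to reach the constant $2M+2$ (or indeed the sharper $2M$).
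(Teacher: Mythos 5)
The paper does not prove this lemma at all: it is quoted verbatim from the reference \cite{chen2013reverse}, so there is no in-paper proof to compare against. Your argument is the standard van der Corput-type proof (reduce to the unweighted partial integral $V(t)$ via one integration by parts, then bound $V$ piecewise using the monotonicity and sign-definiteness of $1/u'$), it is correct as written, and it in fact yields the slightly sharper constant $2M$ in place of $2M+2$.
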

We now show some behavior of $|K_\Gamma(z,y)|$ with limited-aperture measurement curve $\Gamma$. For the ease of 
exposition, we consider $\Gamma=\big\{(\cos\theta,\sin\theta)\big|\theta\in(-\alpha,\alpha)\big\}$ with $ \alpha\in(0,\pi]$, 
while the analysis can be extended to general cases easily.
\begin{lemma}
    Let $R=|y-z|$. Then for fixed $\alpha$, we have
    \begin{equation}
	|K_\Gamma(z,y)|\le Ck^{-3/2}R^{-1/2} \label{equa:esta2},
    \end{equation}
   where $C$ is a constant independent of $\alpha$ and $kR$, and
    \begin{equation}
	\lim_{kR\rightarrow 0}|K_\Gamma(z,y)|=\frac{\alpha}{4k\pi},
	\label{equa:lim1}
\end{equation}
which is the maximum value of $|K_\Gamma(z,y)|$. On the other hand, for fixed $kR$ and small $\alpha$, we have
\begin{equation}
	|K_\Gamma(z,y)|=\frac{\alpha}{4k\pi}+ o(\alpha).\label{equa: smallalpha}
\end{equation}

\begin{proof}
    Firstly, by denoting $y-z=R(\cos\beta,\sin\beta)$ and $\hat{x}$=($\cos\theta,\sin\theta$), we can observe that 
\begin{equation}
|K_\Gamma(z,y)|=\frac{1}{8k\pi}\bigg|\int_{-\alpha}^{\alpha}e^{\text{i}k(y-z)\cdot\hat{x}}d\theta\bigg|=\frac{\alpha}{8k\pi^2}\bigg|\int_{-\pi}^{\pi}e^{\text{i}kR\cos(\frac{\alpha}{\pi}\theta-\beta)}d\theta\bigg|.
\label{equa:39}
\end{equation}
Equations \eqref{equa:lim1} and \eqref{equa: smallalpha} can then be derived easily by directly 
taking the limit of \eqref{equa:39}.

We now prove $ \lim_{kR\rightarrow \infty}K_\Gamma(z,y)=0$. Denote $\delta=(kR)^{-1/2}$ and $Q_\delta=\{\theta\in (-\pi,\pi):|\frac{\alpha}{\pi}\theta-\beta-m\pi|<\delta,m=0,\pm1\}$.  Then we have
\begin{equation}\label{equa:313}
	\bigg|\int_{Q_\delta}e^{\text{i}kR\cos(\frac{\alpha}{\pi}\theta-\beta)}d\theta\bigg|\le 4\pi\alpha^{-1}(kR)^{-1/2} . 
\end{equation}
On the other hand, for $\theta\in (-\pi,\pi)\backslash  Q_\delta$ and small $\delta$, we can deduce 
\begin{equation}\label{equa:314}
	2\pi\alpha^{-1}(kR)^{1/2}\big|\cos\big(\frac{\alpha}{\pi}\theta-\beta\big)'\big|=
	2(kR)^{1/2}\big|\sin\big(\frac{\alpha}{\pi}\theta-\beta\big)\big|\ge 2\frac{\sin\delta}{\delta}\ge 1.
\end{equation}
Thus by Lemma \ref{oscillatory} with $u=2\pi\alpha^{-1}(kR)^{1/2}\cos(\frac{\alpha}{\pi}\theta-\beta),\lambda=(2\pi)^{-1}\alpha(kR)^{1/2}$ and $\phi=1$, we obtain
\begin{equation}\label{equa:315}
	\bigg|\int_{(-\pi,\pi)\backslash Q_\delta}e^{\text{i}kR\cos(\frac{\alpha}{\pi}\theta-\beta)}d\theta\bigg|\le C \alpha^{-1}(kR)^{-1/2},
\end{equation}
where $C$ is independent of $\alpha$ and $kR$.  Now the estimate \eqref{equa:esta2} follows readily 
from \eqref{equa:313} and \eqref{equa:315}.
\end{proof}
\end{lemma}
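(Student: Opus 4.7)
The starting point is to substitute the explicit 2D expression \eqref{equa:farfieldG} for $G^\infty$ into the definition of $K_\Gamma$ and to parametrize $y-z=R(\cos\beta,\sin\beta)$ and $\hat x=(\cos\theta,\sin\theta)$. The two conjugate phases combine cleanly, reducing $K_\Gamma(z,y)$ to the single scalar oscillatory integral
\begin{equation*}
K_\Gamma(z,y)=\frac{1}{8k\pi}\int_{-\alpha}^{\alpha} e^{\text{i}kR\cos(\theta-\beta)}\,d\theta,
\end{equation*}
and a rescaling $\theta\mapsto(\alpha/\pi)\theta$ turns this into exactly the right-hand side of \eqref{equa:39}. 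All three claims can then be read off from this normalized form.

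For the two easy statements I would argue directly from \eqref{equa:39}. The limit \eqref{equa:lim1} follows by sending $kR\to 0$ under the integral: the integrand converges pointwise to $1$, the integral to $2\pi$, and $|K_\Gamma|$ to $\alpha/(4k\pi)$; the trivial bound $|e^{\text{i}kR\cos(\cdot)}|=1$ shows this limit is in fact the maximum value over all $kR$. For \eqref{equa: smallalpha}, with $kR$ fixed, I would Taylor expand in $\alpha$: the zeroth-order term is $\theta$-independent of modulus one and contributes the leading $\alpha/(4k\pi)$ after integrating over $(-\pi,\pi)$, while the correction has modulus $o(\alpha)$.

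The substantive work is the decay estimate \eqref{equa:esta2}, which calls for a stationary-phase style argument via Lemma \ref{oscillatory}. The phase $\cos(\frac{\alpha}{\pi}\theta-\beta)$ is stationary exactly where $\frac{\alpha}{\pi}\theta-\beta\in\pi\mathbb{Z}$; within the interval $(-\pi,\pi)$ only at most the three values $m=0,\pm 1$ are relevant, which accounts for the set $Q_\delta$ in the lemma statement. I would choose $\delta=(kR)^{-1/2}$ and split the integral: on $Q_\delta$ the trivial modulus bound combined with $|Q_\delta|=O(\alpha^{-1}(kR)^{-1/2})$ yields an $O(\alpha^{-1}(kR)^{-1/2})$ contribution, while on the complement one gets $|\sin(\frac{\alpha}{\pi}\theta-\beta)|\ge \sin\delta\gtrsim\delta$, which, applied to the rescaled phase $u=2\pi\alpha^{-1}(kR)^{1/2}\cos(\frac{\alpha}{\pi}\theta-\beta)$, gives $|u'|\ge 1$. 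Invoking Lemma \ref{oscillatory} with $\lambda=\alpha(kR)^{1/2}/(2\pi)$ and $\phi\equiv 1$ produces a second $O(\alpha^{-1}(kR)^{-1/2})$ bound. The outer prefactor $\alpha/(8k\pi^2)$ absorbs the $\alpha^{-1}$, yielding the claimed $k^{-3/2}R^{-1/2}$ rate with constant independent of $\alpha$ and $kR$.

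The real difficulty is the calibration of $\delta$: the contribution on $Q_\delta$ grows like $\delta$ while the oscillatory contribution off $Q_\delta$ grows like $\delta^{-1}$ through the $|\sin|$ lower bound, and the choice $\delta=(kR)^{-1/2}$ is exactly what balances them. The remaining technicalities are routine but need care to preserve uniformity in $\alpha$: counting the at-most-three stationary points, counting the monotonicity intervals of $u'$ required to apply Lemma \ref{oscillatory} with a uniform $M$, and verifying $\sin\delta/\delta\ge 1/2$ for $kR$ large enough (the case of bounded $kR$ being covered directly by the trivial bound).
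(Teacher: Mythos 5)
Your proposal is correct and follows essentially the same route as the paper: the same reduction to the rescaled oscillatory integral, the same splitting into $Q_\delta$ and its complement with $\delta=(kR)^{-1/2}$, and the same application of Lemma \ref{oscillatory} with $u=2\pi\alpha^{-1}(kR)^{1/2}\cos(\frac{\alpha}{\pi}\theta-\beta)$, $\lambda=(2\pi)^{-1}\alpha(kR)^{1/2}$ and $\phi\equiv 1$. Your added remarks on calibrating $\delta$ and on uniformity in $\alpha$ are consistent with, and slightly more explicit than, the paper's argument.
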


Equations \eqref{equa:esta2} and \eqref{equa:lim1} indicate that $|K_\Gamma(z,y)|$ takes a relatively 
large value when $z$ is close to $y$ while taking a small value when $z$ is away from $y$. 
This means that $G^\infty(z,\hat{x})$ can be a reasonable probing function for limited-aperture data, 
as we remarked earlier right after \eqref{eqn:rules}. 
However, equation \eqref{equa:lim1} together with equation \eqref{equa: smallalpha} indicates 
that for small $\alpha$, the decay rate of $K_\Gamma(z,y)$ is slow, 
and we may achieve only low-resolution reconstruction. 
Fig.\ref{fig: decayrate} shows the decay behavior of $|K_\Gamma(z,y)|$ with $k=8, \alpha=\pi/3$ and serveral different $\beta$, 
from which we observe that the decay rate is slower along the direction $\beta=0$ than other directions. 
This explains the big challenge for the reconstruction in the ``shadow region", 
and motivates a major focus of this work, i.e., to construct suitable probing functions 
when limited-aperture measurement data is available so that we may break 
the limit of low-resolution reconstruction to some extent.
\begin{figure}[htp]
		\centering
	\includegraphics[width=0.45\linewidth]{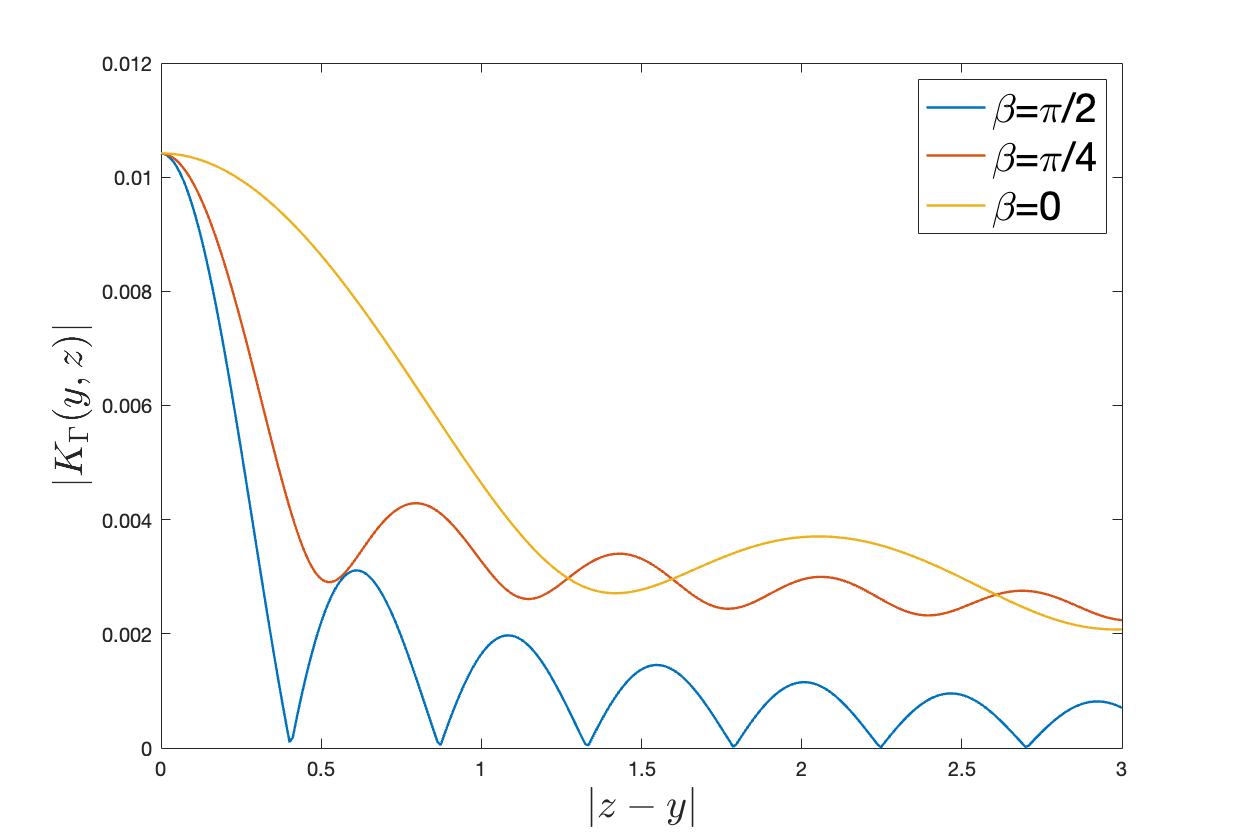}
	\caption{The decay behaviour of $|K_\Gamma(z,y)|$ with $k=8, \alpha=\pi/3$ and different $\beta$.}
	\label{fig: decayrate}
\end{figure}

\section{A finite space framework for constructing probing functions}
\label{sec:FEM}
In this section, we first propose a general finite space framework for constructing an effective probing functions 
for the DSM with limited-aperture data measured from the general partial area $\Gamma$. Then, based on this framework and the properties of the far-field data, 
we introduce two specific schemes to construct the probing functions. The idea of this framework is motivated by the unique continuation of the analytic function. Due to the unique continuation of the analytic function, the limited-aperture data $u^\infty|_\Gamma$ contains the whole information of the full-aperture data $u^\infty|_{\mathbb{S}^1}$, which motivates us to find a probing function $G_\Gamma(z,\hat{x})$ such that
\begin{equation}
	 	 \langle G_\Gamma(z,\hat{x}), u^\infty(\hat{x})\rangle_{L^2(\Gamma)}\approx\langle G^\infty(z,\hat{x}), u^\infty(\hat{x})\rangle_{L^2(\mathbb{S}^1)}, \quad u^\infty\in C^\infty(\mathbb{S}^1).
\end{equation}
To find the probing function satisfying the above property, we introduce two finite dimensional spaces:
\begin{equation}
	\mathcal{U}=\text{Span}\{\psi_1,\psi_2,\cdots,\psi_M\}\subset L^2(\Gamma),
\end{equation} 
\begin{equation}
	\mathcal{V}=\text{Span}\{\varphi_1,\varphi_2,\cdots,\varphi_N\}\subset C^\infty(\mathbb{S}^1).
\end{equation}
We call $\mathcal{U}$ the trial space and $\mathcal{V }$ the testing space. The trial space $\mathcal{U}$ consists of functions that are used to approximate the probing function $G_\Gamma(z,\hat{x})$, while the testing space $\mathcal{V }$  consists of functions we use to impose the constraints. Specifically, we construct the probing function $G_\Gamma(z,\hat{x})$ with a finite expansion $\sum_{m=1}^{M}f_m(z)\psi_m(\hat{x})$, where the coefficient functions $\{f_m(z)\}_{m=1}^{M}$ are to be determined such that
\begin{equation}
		\langle \sum_{m=1}^{M}f_m(z)\psi_m(\hat{x}),v(\hat{x})\rangle_{L^2(\Gamma)}\approx \langle G^\infty(z,\hat{x}), v(\hat{x})\rangle_{L^2(\mathbb{S}^1)}, \quad \forall v\in \mathcal{V}, \,z\in \Omega.
\end{equation}
By the linearity of the inner product, this is equivalent to 
\begin{equation}
	\sum_{m=1}^{M}f_m(z)\langle \psi_m(\hat{x}),\varphi_n(\hat{x})\rangle_{L^2(\Gamma)}\approx \langle G^\infty(z,\hat{x}), \varphi_n(\hat{x})\rangle_{L^2(\mathbb{S}^1)} \quad \text{for} \quad n=1,2,\cdots,N; \, z\in \Omega.
	\label{equa:FEMf1}
\end{equation}
The above equation then defines a matrix $\mathbb{A}\in \mathbb{C}^{N\times M }$ and a vector function $\mathbf{B}(z)\in  \mathbb{C}^N, z\in \Omega$ with entries
\begin{equation}
	\mathbb{A}_{nm}=\langle \psi_m(\hat{x}),\varphi_n(\hat{x})\rangle_{L^2(\Gamma)}, \quad \mathbf{B}_n(z)=\langle G^\infty(z,\hat{x}), \varphi_n(\hat{x})\rangle_{L^2(\mathbb{S}^1)},
 \label{equa:FEMAB}
\end{equation}
for  $m=1,2,\cdots, M$ and $n=1,2,\cdots,N$. 
Note that the matrix $\mathbb{A}$ depends on $\mathcal{U},\mathcal{V}$ and $\Gamma$, while the vector function $\mathbf{B}(z)$ depends on the testing space $\mathcal{V}$ and the sampling point $z$. By further writing $\mathbf{F}(z)=(f_1(z),f_2(z),\cdots,f_M(z))^{T}$, equation \eqref{equa:FEMf1} can be rewritten as
\begin{equation}
\mathbb{A}(\mathbf{F}(z))\approx \mathbf{B}(z), \quad z \in \Omega.
\label{equa:FEMf2}
\end{equation}
The system \eqref{equa:FEMf2} is likely to be ill-conditioned due to the severely ill-posed nature of limited-aperture data.
We apply a regularization strategy to solve the system for constructing a probing function $G_\Gamma(z,\hat{x})$, 
which is expected to provide a stable approximation. 
More precisely, we have the following abstract error estimate of the approximation for this general framework.

%

\begin{lemma}
    Let $\mathcal{P}u^\infty$ be the projection of $u^\infty$ into the space $\mathcal{V}$, $u^\infty_\delta$ 
    the measurement data, and $\varepsilon_1=\Vert u^\infty-\mathcal{P}u^\infty\Vert_{L^2(\mathbb{S}^1)}, \varepsilon_2=\Vert u^\infty-\mathcal{P}u^\infty\Vert_{L^2(\Gamma)}$ and $\delta=\Vert u^\infty-u^\infty_\delta\Vert_{L^2(\Gamma)}$. Then we have
\begin{equation}
    \begin{split}
           &\big|\langle G_\Gamma(z,\hat{x}), u^\infty_\delta(\hat{x})\rangle_{L^2(\Gamma)}-\langle G^\infty(z,\hat{x}), u^\infty(\hat{x})\rangle_{L^2(\mathbb{S}^1)}\big|
           \\
           \le &(\varepsilon_2+\delta)\Vert G_\Gamma(z,\cdot)\Vert_{L^2(\Gamma)}+\varepsilon_1\Vert G^\infty(z,\cdot)\Vert_{L^2(\mathbb{S}^1)}+\big|\langle G_\Gamma(z,\hat{x}),\mathcal{P}u^\infty(\hat{x})\rangle_{L^2(\Gamma)}-\langle G^\infty(z,\hat{x}), \mathcal{P}u^\infty(\hat{x})\rangle_{L^2(\mathbb{S}^1)}\big|,
    \end{split}   
    \label{equa:errorest}
    \end{equation}
    or  \begin{equation}
    \begin{split}
           &\big|\langle G_\Gamma(z,\hat{x}), u^\infty_\delta(\hat{x})\rangle_{L^2(\Gamma)}-\langle G^\infty(z,\hat{x}), u^\infty(\hat{x})\rangle_{L^2(\mathbb{S}^1)}\big|
           \\
           \le &(\varepsilon_2+\delta)\Vert G_\Gamma(z,\cdot)\Vert_{L^2(\Gamma)}+\varepsilon_1\Vert G^\infty(z,\cdot)\Vert_{L^2(\mathbb{S}^1)}+\eta\Vert \mathcal{P}u^\infty\Vert_{L^2(\mathbb{S}^1)},
    \end{split}   
    \label{equa:errorest2}
    \end{equation}
    where
     \begin{equation}
        \eta:=\sup_{v\in \mathcal{V}} \frac{\big|\langle G_\Gamma(z,\hat{x}),v\rangle_{L^2(\Gamma)}-\langle G^\infty(z,\hat{x}), v\rangle_{L^2(\mathbb{S}^1)}\big|}{\Vert v\Vert_{L^2(\mathbb{S}^1)}}.
    \end{equation}
        \label{lemma:41}
\end{lemma}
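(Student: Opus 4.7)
The plan is to prove both inequalities by a single decomposition: splitting $u^\infty_\delta$ into a noise part plus $u^\infty$, then splitting $u^\infty$ into its projection $\mathcal{P}u^\infty \in \mathcal{V}$ plus a projection residual. This lines up the various error sources with the terms on the right-hand side of \eqref{equa:errorest}.

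Concretely, first I would write
\begin{equation*}
\langle G_\Gamma, u^\infty_\delta\rangle_{L^2(\Gamma)} - \langle G^\infty, u^\infty\rangle_{L^2(\mathbb{S}^1)}
= T_1 + T_2 + T_3 + T_4,
\end{equation*}
where $T_1 = \langle G_\Gamma, u^\infty_\delta - u^\infty\rangle_{L^2(\Gamma)}$, $T_2 = \langle G_\Gamma, u^\infty - \mathcal{P}u^\infty\rangle_{L^2(\Gamma)}$, $T_3 = -\langle G^\infty, u^\infty - \mathcal{P}u^\infty\rangle_{L^2(\mathbb{S}^1)}$, and $T_4 = \langle G_\Gamma, \mathcal{P}u^\infty\rangle_{L^2(\Gamma)} - \langle G^\infty, \mathcal{P}u^\infty\rangle_{L^2(\mathbb{S}^1)}$. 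Then, by Cauchy--Schwarz applied to $T_1, T_2, T_3$ together with the definitions of $\delta, \varepsilon_2, \varepsilon_1$, I obtain $|T_1| \le \delta\,\|G_\Gamma(z,\cdot)\|_{L^2(\Gamma)}$, $|T_2| \le \varepsilon_2\,\|G_\Gamma(z,\cdot)\|_{L^2(\Gamma)}$, and $|T_3| \le \varepsilon_1\,\|G^\infty(z,\cdot)\|_{L^2(\mathbb{S}^1)}$. Combining $T_1$ and $T_2$ yields the factor $(\varepsilon_2 + \delta)\|G_\Gamma(z,\cdot)\|_{L^2(\Gamma)}$, and $T_4$ is exactly the residual term appearing in \eqref{equa:errorest}, so applying the triangle inequality closes the first bound.

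For the second inequality \eqref{equa:errorest2}, the only change is in estimating $T_4$. Since $\mathcal{P}u^\infty \in \mathcal{V}$, it is an admissible test function in the supremum defining $\eta$, so
\begin{equation*}
|T_4| \le \eta\,\|\mathcal{P}u^\infty\|_{L^2(\mathbb{S}^1)},
\end{equation*}
provided $\mathcal{P}u^\infty \neq 0$ (the case $\mathcal{P}u^\infty = 0$ is trivial). Substituting this into the same triangle-inequality chain produces \eqref{equa:errorest2}.

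There is no real obstacle: both inequalities reduce to careful add-and-subtract bookkeeping combined with Cauchy--Schwarz and the definition of $\eta$. The only point requiring a little care is ensuring that the projection residuals are measured in the correct norms ($L^2(\Gamma)$ versus $L^2(\mathbb{S}^1)$), so that $\varepsilon_2$ pairs with $G_\Gamma$ on $\Gamma$ and $\varepsilon_1$ pairs with $G^\infty$ on $\mathbb{S}^1$; the decomposition above is designed exactly so that this matching is automatic.
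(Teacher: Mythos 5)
Your proof is correct and takes essentially the same route as the paper: the identical add-and-subtract decomposition through the intermediate terms $u^\infty$ and $\mathcal{P}u^\infty$, the same Cauchy--Schwarz estimates producing the $(\varepsilon_2+\delta)\Vert G_\Gamma(z,\cdot)\Vert_{L^2(\Gamma)}$ and $\varepsilon_1\Vert G^\infty(z,\cdot)\Vert_{L^2(\mathbb{S}^1)}$ terms, and the same appeal to the definition of $\eta$ with $\mathcal{P}u^\infty\in\mathcal{V}$ to pass from \eqref{equa:errorest} to \eqref{equa:errorest2}.
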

    \begin{proof}
    By introducing some intermediate terms and using the triangle and Cauchy–Schwarz inequalities, we derive immediately 
        \begin{equation}
	\begin{split}
		&\big|\langle G_\Gamma(z,\hat{x}), u^\infty_\delta(\hat{x})\rangle_{L^2(\Gamma)}-\langle G^\infty(z,\hat{x}), u^\infty(\hat{x})\rangle_{L^2(\mathbb{S}^1)}\big|\\
		\le&\big|\langle G_\Gamma(z,\hat{x}), u^\infty_\delta(\hat{x})-u^\infty(\hat{x})\rangle_{L^2(\Gamma)}\big|+\big|\langle G_\Gamma(z,\hat{x}), u^\infty(\hat{x})\rangle_{L^2(\Gamma)}-\langle G^\infty(z,\hat{x}), u^\infty(\hat{x})\rangle_{L^2(\mathbb{S}^1)}\big|\\
		\le& \Vert G_\Gamma(z,\cdot)\Vert_{L^2(\Gamma)}\Vert u^\infty_\delta-u^\infty\Vert_{L^2(\Gamma)}+\big|\langle G_\Gamma(z,\hat{x}),\mathcal{P}u^\infty(\hat{x})\rangle_{L^2(\Gamma)}-\langle G^\infty(z,\hat{x}), \mathcal{P}u^\infty(\hat{x})\rangle_{L^2(\mathbb{S}^1)}\big|\\
		&+\big|\langle G_\Gamma(z,\hat{x}), u^\infty(\hat{x})-\mathcal{P}u^\infty(\hat{x})\rangle_{L^2(\Gamma)}+\langle G^\infty(z,\hat{x}), \mathcal{P}u^\infty(\hat{x})-u^\infty(\hat{x})\rangle_{L^2(\mathbb{S}^1)}\big|\\
		\le  &\Vert G_\Gamma(z,\cdot)\Vert_{L^2(\Gamma)}\big(\Vert u^\infty_\delta-u^\infty\Vert_{L^2(\Gamma)}+\Vert u^\infty-\mathcal{P}u^\infty\Vert_{L^2(\Gamma)}\big)+\Vert G(z,\cdot)\Vert_{\mathbb{S}^1}\Vert u^\infty-\mathcal{P}u^\infty\Vert_{L^2(\mathbb{S}^1)}\\
		&+\big|\langle G_\Gamma(z,\hat{x}),\mathcal{P}u^\infty(\hat{x})\rangle_{L^2(\Gamma)}-\langle G^\infty(z,\hat{x}), \mathcal{P}u^\infty(\hat{x})\rangle_{L^2(\mathbb{S}^1)}\big|\\
       =&(\varepsilon_2+\delta)\Vert G_\Gamma(z,\cdot)\Vert_{L^2(\Gamma)}+\varepsilon_1\Vert G^\infty(z,\cdot)\Vert_{L^2(\mathbb{S}^1)}+\big|\langle G_\Gamma(z,\hat{x}),\mathcal{P}u^\infty(\hat{x})\rangle_{L^2(\Gamma)}-\langle G^\infty(z,\hat{x}), \mathcal{P}u^\infty(\hat{x})\rangle_{L^2(\mathbb{S}^1)}\big|.
	\end{split}
\end{equation}
Then the inequality \eqref{equa:errorest} is proved, while the inequality \eqref{equa:errorest2} can be easily derived from \eqref{equa:errorest} and the definition of $\eta$.
    \end{proof}

From the above analysis, a direct observation is that the testing space $\mathcal{V}$ should be large enough so that $u^\infty$ can be well approximated by $\mathcal{P}u^\infty$. To control the first error component in \eqref{equa:errorest}, $\Vert G_\Gamma(z,\cdot)\Vert_{L^2(\Gamma)}$ is desired to be small, which can be achieved by employing a regularization method for solving 
the system \eqref{equa:FEMf2}. On the other hand, an over-regularization may make the third error component in \eqref{equa:errorest} large. 
Thus, the role of the regularization is to balance the first and the third error components in \eqref{equa:errorest}. We now summarize the abstract algorithm for this framework in Algorithm \ref{Algor:FEMframe}. 

\textbf{Parallel implementation of the finite space framework.} Note that solving the liner system\eqref{equa:FEMf2} at different sampling points are independent of each other. Thus, the probing function can be computed in parallel, which is very important
for large-scale reconstructions. In addition, if all sampling points employ the same regularized inverse $\mathcal{R}_\sigma$ to compute the probing function via $\mathbf{F}(z)= \mathcal{R}_\sigma\mathbf{B}(z)$, then the regularized inverse $\mathcal{R}_\sigma$ 
needs to be computed only once.

\begin{algorithm}[h]
	\caption{Constructing the probing functions based on the finite space framework}
	\label{Algor:FEMframe}
	\KwIn{ \\
		\quad$\bullet$ Given data $u^\infty_{\Gamma,\delta}(\hat{x})$ measured over the partial set of curves 
		$\Gamma\subset \mathbb{S}^{1}$.\\
		\quad$\bullet$ A sampling domain $\Omega$.
	}
	Choose a trial space $\mathcal{U}$ and a testing space $\mathcal{V}$:
	\begin{equation}
		\mathcal{U}=\text{Span}\{\psi_1,\psi_2,\cdots,\psi_M\}\subset L^2(\Gamma),
	\end{equation} 
	\begin{equation}
		\mathcal{V}=\text{Span}\{\varphi_1,\varphi_2,\cdots,\varphi_N\}\subset C^\infty(\mathbb{S}^1).
	\end{equation}
	\\ Compute the matrix $\mathbb{A}\in \mathbb{C}^{N\times M }$ and the vector function $\mathbf{B}(z)\in  \mathbb{C}^N, z\in \Omega$ with entries
	\begin{equation}
		\mathbb{A}_{nm}=\langle \psi_m(\hat{x}),\varphi_n(\hat{x})\rangle_{L^2(\Gamma)}, \quad \mathbf{B}_n(z)=\langle G^\infty(z,\hat{x}), \varphi_n(\hat{x})\rangle_{L^2(\mathbb{S}^1)}
	\end{equation}
	for  $m=1,2,\cdots,M$ and $n=1,2,\cdots,N$.
	\\ Compute $\mathbf{F}(z)=(f_1(z),f_2(z),\cdots,f_M(z))^{T}$ by solving the equation $\mathbb{A}(\mathbf{F}(z))\approx \mathbf{B}(z)$ with a regularization method, e.g., the Tikhonov regularization: 
	\begin{equation}
		\mathbf{F}(z)=(\sigma I +\mathbb{A}^{*}\mathbb{A})^{-1}\mathbb{A}^{*}\mathbf{B}(z),\quad z\in \Omega.
  \label{equa:Tikhonov}
	\end{equation}
	\\ Form the probing function:
	\begin{equation}
	   G_\Gamma(z,\hat{x})=\sum_{m=1}^{M}f_m(z)\psi_m(\hat{x}), \quad z\in\Omega, \hat{x} \in\Gamma.
	\end{equation}
	\\Compute the index function: 
	\begin{equation}
		\mathcal{I}_\Gamma(z)= 	\big|\langle G_\Gamma(z,\hat{x}), u^\infty_{\Gamma,\delta}(\hat{x})\rangle_{L^2(\Gamma)}\big|,\quad z\in\Omega.
	\end{equation}
\end{algorithm}

In addition to the regularization, the trial space and testing space are two crucial components in the finite space framework.
In the next two subsections, we provide one choice of the trial space and two of the testing space.

\subsection{Finite Fourier space method}
\label{sec:FFSM}
It is known that each analytic function has exponentially decaying Fourier coefficients. 
Based on the approximation ability of the Fourier series, a natural choice of the trial and testing spaces 
is the finite Fourier space:
\begin{equation}
	\mathbf{F}_P:=\text{Span}\bigg\{\frac{1}{\sqrt{2\pi}}e^{-\text{i}P\theta},\frac{1}{\sqrt{2\pi}}e^{-\text{i}(P-1)\theta},\cdots,\frac{1}{\sqrt{2\pi}}e^{\text{i}(P-1)\theta},\frac{1}{\sqrt{2\pi}}e^{\text{i}P\theta}\bigg\}.
 \label{equa:FFS}
\end{equation} 
We shall call the resulting method as the finite Fourier space method (FFSM).
With this choice, the matrix $\mathbb{A}$ in \eqref{equa:FEMAB} can be computed exactly as 
\begin{equation}
	\mathbb{A}_{nm}=\frac{1}{2\pi}\langle e^{\text{i}m\theta},  e^{\text{i}n\theta}\rangle_{L^2(\Gamma)}=\begin{cases}
		\sum_{l=1}^{L}\frac{\alpha_l}{\pi}, \quad m=n,\\
		\sum_{l=1}^{L}\frac{\sin((m-n)\alpha_l)}{(m-n)\pi}e^{\text{i}(m-n)\beta_l}, \quad m\ne n.
	\end{cases}
\end{equation}
By the orthogonality of the Fourier basis and the Jacobi–Anger expansion\cite{colton2019inverse}:
\begin{equation}
	\begin{split}
		e^{\text{i}kz\cdot\hat{x}}&=J_0(k|z|)+2\sum_{n=1}^{\infty}\text{i}^n J_n(k|z|)\cos(n(\theta_z-\theta_x))\\
		&=\sum_{n=-\infty}^{\infty}\text{i}^n J_n(k|z|)e^{\text{i}n\theta_z}e^{-\text{i}n\theta_x},
	\end{split}	
 \label{equa:Jacobi}
\end{equation}
we obtain the vector $\textbf{B}_n(z)$ in \eqref{equa:FEMAB}:
\begin{equation}
	\textbf{B}_n(z)=\langle G^\infty(z,\hat{x}), \dfrac{1}{\sqrt{2\pi}}e^{\text{i}n\theta}\rangle_{L^2(\mathbb{S}^1)} =\frac{\text{i}^{-n}e^{\text{i}\pi/4}}{2\sqrt{k}} J_n(k|z|)e^{-\text{i}n\theta_z}, \quad n=-P,-(P-1),\cdots,P-1,P; z\in \Omega.
\end{equation}

\subsection{Finite source space method}
\label{sec:FSSM}
Recall that the far-field pattern has the following convolution representation
\begin{equation}
	u^\infty(\hat{x})=\int_\Omega G^\infty(y,\hat{x})I(y)dy\approx \sum_j w_j G^\infty(y_j,\hat{x}), \quad \hat{x} \in \mathbb{S}^1.
\end{equation}
Thus, given a set of finite points $\{y_j\}_{j=1}^{N}\subset \Omega$, which are dense enough in $\Omega$,
the far-field pattern can be well approximately by a function in the finite dimensional space:
\begin{equation}
	\text{Span}\{G^\infty(y_1,\hat{x}),G^\infty(y_2,\hat{x}),\cdots,G^\infty(y_N,\hat{x})\}. 
\end{equation} 
This motivates the choice of the above space as a testing space.  
For the trial space, we propose to still employ the finite Fourier space\eqref{equa:FFS}. In conclusion, we have
\begin{equation}
	\mathcal{U}=\text{Span}\bigg\{\frac{1}{\sqrt{2\pi}}e^{-\text{i}P\theta},\frac{1}{\sqrt{2\pi}}e^{-\text{i}(P-1)\theta},\cdots,\frac{1}{\sqrt{2\pi}}e^{\text{i}(P-1)\theta},\frac{1}{\sqrt{2\pi}}e^{\text{i}P\theta}\bigg\}, 
 \label{equa:fssmTest}
\end{equation}
\begin{equation}
		\mathcal{V}=\text{Span}\{G^\infty(y_1,\hat{x}),G^\infty(y_2,\hat{x}),\cdots,G^\infty(y_M,\hat{x})\}.
  \label{equa:fssmTria}
\end{equation}
Since each $G^\infty(y_j,\hat{x})$ can be considered as the far-field pattern generated by a source located at the point 
$y_j$, we call this method the finite source space method (FSSM). We see that the difference between the finite Fourier space method and the finite source space method is the choice of the testing spaces. 
By using the Jacobi-Anger expansion \eqref{equa:Jacobi}, we have the following expression for the matrix $\mathbb{A}$ in \eqref{equa:FEMAB}:

\begin{equation}
	\mathbb{A}_{nm}=\frac{1}{\sqrt{2\pi}}\langle e^{\text{i}m\theta}, G^\infty(y_n,\hat{x})\rangle_{L2(\Gamma)}=	\frac{e^{-\text{i}\pi/4}}{2\pi\sqrt{k}}\sum_{p=-\infty}^{\infty}\sum_{l=1}^{L}\text{i}^{p}e^{\text{i}(m-p)\beta_l}e^{\text{i}p\theta_{y_n}}J_p(k|y_n|)C_{l,m,p},
\end{equation}
where 
\begin{equation}
	C_{l,m,p}=\begin{cases}
		\alpha_l \quad m=p,\\
		\frac{\sin(\alpha_l(m-p))}{m-p} \quad m\ne p,
	\end{cases}
\end{equation}
and the following expression for $\textbf{B}_n(z)$\cite{li2013direct}:
\begin{equation}
	\textbf{B}_n(z) = \langle G^\infty(z,\hat{x}), G^\infty(y_n,\hat{x})\rangle_{L^2(\mathbb{S}^1)}=\frac{1}{4k} J_0(k|z-y_n|).
\end{equation}
A natural choice of the quadrature points $\{y_j\}_{j=1}^N$ is a set of uniformly distributed points in $\Omega$.

\section{A deep learning approach for constructing probing functions: deep probing network}\label{sec:deeplearning}
In this section, we exploit the deep learning technique to construct some effective probing functions to be used 
for the DSM when only limited-aperture data is available. Thanks to the universal approximation ability of the neural network, we can construct the probing function $G_\Gamma(z,\hat{x})$ by introducing a neural network $\mathcal{NN}_\vartheta(z,\hat{x})$, where $(z,\hat{x})$ is the input of the neural network and $\vartheta$ denotes the parameters in the neural network. Finding a reasonable network $\mathcal{NN}_\vartheta(z,\hat{x})$ as an effective probing function is 
then equivalent to finding a suitable $\vartheta$ from a parameter space $\Theta$. Thus, an important part is to design an appropriate network architecture; another crucial part is to introduce a proper loss function so that we can iteratively update $\vartheta$ with finite steps to get a satisfactory network  $\mathcal{NN}_\vartheta(z,\hat{x})$ as the probing function. Recall that the role of the loss function is to enforce the neural network to satisfy some desired properties. Similar to the methods based on the finite space framework, a reasonable desired property for the network is that for any far-field pattern $u^\infty(\hat{x})$,  it holds 
\begin{equation}
		\langle \mathcal{NN}_{\vartheta}(z,\hat{x}),u^\infty(\hat{x})\rangle_{L^2(\Gamma)}\approx \langle G^\infty(z,\hat{x}), u^\infty(\hat{x})\rangle_{L^2(\mathbb{S}^1)}, \quad \forall z\in \Omega.
  \label{equa:property}
\end{equation}
By using the linearity of the inner product and the convolution representation of the far-field pattern \eqref{equa:disIntegtal}, it would be reasonable to require the following in order to meet the approximation property \eqref{equa:property}:
\begin{equation}
		\langle \mathcal{NN}_{\vartheta}(z,\hat{x}),G^\infty(y,\hat{x})\rangle_{L^2(\Gamma)}\approx \langle G^\infty(z,\hat{x}), G^\infty(y,\hat{x})\rangle_{L^2(\mathbb{S}^1)}, \quad  \forall y, z\in \Omega,
\end{equation} 
or finding a network $\mathcal{NN}_{\vartheta}$ to meet the following approximation
for an appropriately selected integer $N$:
\begin{equation}
		\langle \mathcal{NN}_{\vartheta}(z,\hat{x}),\sum_{n=1}^{N}c_nG^\infty(y_n,\hat{x})\rangle_{L^2(\Gamma)}\approx \langle G^\infty(z,\hat{x}), \sum_{n=1}^{N}c_nG^\infty(y_n,\hat{x})\rangle_{L^2(\mathbb{S}^1)}. \quad  \forall y_n, z\in \Omega, \,c_n\in \mathbb{C}.
\end{equation}
But noting that $G^\infty(z,\hat{x})$ may be of small values, especially for high wavenumbers, 
we may encounter some numerical difficulties during the training process. By the linearity of the inner product and the expression \eqref{equa:farfieldG} 
of $G^\infty(z,\hat{x})$, 
it is reasonable for us to construct a neural network $\mathcal{NN}_{\vartheta}$ 
to meet the following approximation instead:
\begin{equation}
    \langle \mathcal{NN}_{\vartheta}(z,\hat{x}),\sum_{n=1}^{N}c_n\exp(-\text{i}k\hat{x}\cdot y_n)\rangle_{L^2(\Gamma)}\approx \langle \exp(-\text{i}k\hat{x}\cdot z), \sum_{n=1}^{N}c_n\exp(-\text{i}k\hat{x}\cdot y_n)\rangle_{L^2(\mathbb{S}^1)}, \quad  \forall y_n, z\in \Omega, \forall c_n\in \mathbb{C}.
    \label{equa:deepSystem}
\end{equation}
Thus, we can introduce the following loss function with continuous form:
\begin{equation}
	\frac{1}{M}\sum_{m=1}^{M}\Vert 	\langle \mathcal{NN}_{\vartheta}(z,\hat{x}),v^\delta_m(\hat{x})\rangle_{L^2(\Gamma)}- \langle \exp(-\text{i}k\hat{x}\cdot z), v_m(\hat{x})\rangle_{L^2(\mathbb{S}^1)} \Vert^2_{L^2(\Omega)}
\end{equation}
where $v_m(\hat{x})=\sum_{n=1}^{N}c_{nm}\exp(-\text{i}k\hat{x}\cdot y_{nm})$ and $v^\delta_m(\hat{x})$ is obtained by polluting $v_m(\hat{x})$ with noise to make the learned probing function robust to noise. In each iteration $\Re (c_{nm})$ and  $\Im (c_{nm})$ are randomly chosen from the normal distribution, $\{y_{nm}\}$ are randomly chosen from the uniform distribution of $\Omega$.  To compute the loss function numerically, we introduce the following discrete loss function:
\begin{equation}
	Loss(\vartheta) = \frac{1}{ML}\sum_{m=1}^{M}\sum_{l=1}^{L}\bigg|\frac{|\Gamma|}{Q}\sum_{q=1}^{Q}\mathcal{NN}_{\vartheta}(z_l,\hat{x}_q)\overline{v_m^\delta(\hat{x}_q)}-2\pi\sum_{n=1}^{N}\overline{c_{nm}}J_0(k|z_l-y_{nm}|)\bigg|^2,
 \label{equa:lossfunc}
\end{equation}
where $L$ is a fixed number and points $\{z_l\}_{l=1}^{L}$ are randomly chosen from the uniform distribution of $\Omega$ in each iteration.  $\{\hat{x}_q\}_{q=1}^{Q}$ are some uniformly distributed points on $\Gamma$. We name the proposed deep learning scheme ``deep probing network" and summarize it in Algorithm \ref{Algor:learning}.  
\begin{algorithm}[h]
	\caption{Constructing the probing function based on the deep probing network}
	\label{Algor:learning}
	\KwIn{ \\
		\quad$\bullet$ Given the measurement curves $\Gamma$ and the sampling domain $\Omega$.
	}
	\SetKwInput{kwInit}{Preparations}
	\SetKwInput{sss}{In each iteration, we apply the following procedures}
	\kwInit{\\
	\quad$\bullet$ Choose positive integers $M,N,Q,L,N_{\text{ite}}$, and a positive number $\lambda$.\\
	\quad$\bullet$ Choose learning rates $\{\tau_j\}_{j=1}^{N_{\text{ite}}}$ and uniformly distributed points $\{\hat{x}_q\}_{q=1}^Q$ on $\Gamma$.\\
	\quad$\bullet$ Introduce a neural network $\mathcal{NN}_{\vartheta}$ with parameter $\vartheta$.}
	\sss{}
	For $n=1,2,\cdots,N;m=1,2,\cdots,M$, randomly sample the points $\{y_{nm}\}$ over 
	$\Omega$ in uniform distribution, 
	and the numbers $\{c_{nm}\}$ with $\Re (c_{nm})$ and  $\Im (c_{nm})$ following the normal distribution.
	\\ Define $M$ functions $\{v_m\}_{m=1}^M$ as
	\begin{equation}
		v_m(\hat{x})=\sum_{n=1}^{N}c_{nm}\exp(-\text{i}k\hat{x}\cdot y_{nm}), \quad m=1,2,\cdots,M;\hat{x}\in\mathbb{S}^1.
	\end{equation}
	\\ Randomly sample the noise level $\delta$ from the uniform distribution $\mathcal{U}(0,\lambda)$, and pollute $\{v_m\}_{m=1}^M$ via
	\begin{equation}
		v_m^\delta(\hat{x})=v_m(x)+\delta(\eta_r(\hat{x})+\text{i}\eta_i(\hat{x}))\frac{\Vert v_m\Vert_{L^2(\Gamma)}}{|\Gamma|^{1/2}},
	\end{equation}
	where $\eta_r(\hat{x})$ and $\eta_i(\hat{x})$ follow the normal distribution.
	\\ Randomly sample $L$ points $\{z_l\}_{l=1}^{L}$ from the uniform distribution of $\Omega$.
	\\Compute the loss function as 
	\begin{equation}
		Loss(\vartheta) = \frac{1}{ML}\sum_{m=1}^{M}\sum_{l=1}^{L}\bigg|\frac{|\Gamma|}{Q}\sum_{q=1}^{Q}\mathcal{NN}_{\vartheta}(z_l,\hat{x}_q)\overline{v_m^\delta(\hat{x}_q)}-2\pi\sum_{n=1}^{N}\overline{c_{nm}}J_0(k|z_l-y_{nm}|)\bigg|^2,
	\end{equation}
	\\ Update the parameters of the network at the $j$th iteration:
	\begin{equation}
		\vartheta\leftarrow \vartheta-\tau_j\nabla_\vartheta Loss(\vartheta).
	\end{equation}
\end{algorithm}

We now propose a special architecture for the network $\mathcal{NN}_{\vartheta}(z,\hat{x})$, which will be used in the numerical experiments. In this architecture, we construct the probing function $G_\Gamma(z,\hat{x})$ as
\begin{equation}
	G_\Gamma(z,\hat{x})=\sum_{n=-P}^{P}f_{\vartheta,n}(z)e^{\text{i}n\theta_x}+\exp(-\text{i}k\hat{x}\cdot z),
 \label{equa:network}
\end{equation}
where $\{f_{\vartheta,n}(z)\}_{n=-P}^{P}$ represent the output of a neural network with $z$ as the input. The architecture is visually shown in Fig.\ref{fig: network}. The output dimension of the neural network is equal to $4P+2$, where $2P+1$ elements represent real part of $\{f_{\vartheta,n}\}_{n=-P}^{P}$, and the others represent  imaginary part of $\{f_{\vartheta,n}\}_{n=-P}^{P}$. The term $\exp(-\text{i}k\hat{x}\cdot z)$ can be considered an initial guess of the desired probing function.

\begin{remark}
	The proposed network architecture \eqref{equa:network} shown in Fig.\ref{fig: network}, which only takes $z$ as the input of the neural network, is a special architecture, similar to the POD-DeepONet proposed in \cite{lu2022comprehensive}. Due to the severe ill-posedness of the problem with limited-aperture data, the high-frequency modes of the probing function become important to capture the high-frequency information of the limited-aperture data.  Our numerical experiences show that this special architecture can achieve better results than a general fully connected neural network that takes $(z,\hat{x})$ as the input and $G_\Gamma(z,\hat{x})$ as the output. This is because general neural networks have limitations on learning high-frequency modes of a function\cite{CiCP-28-1746}, while the proposed network architecture \eqref{equa:network} employ the explict Fourier basis $\{e^{\text{i}n\theta_x}\}_{n=-P}^P$ to capture the high-frequency modes so that only the coefficient functions $\{f_{\theta,n}\}_{n=-P}^P$ are required to be learned. However, we still keep the general notation $\mathcal{NN}_\vartheta(z,\hat{x})$ in this method to allow more flexibility for the network architecture. For example, a more general framework is to introduce an architecture with the form $G_\Gamma(z,\hat{x})=\sum_{n=1}^{N}f_{\vartheta_1,n}(z)\psi_{\vartheta_2,n}(\hat{x})+\exp(-\text{i}k\hat{x}\cdot z)$, where $F_\vartheta(z)=\{f_{\vartheta_1,1}(z),f_{\vartheta_1,2}(z),\cdots,f_{\vartheta_1,N}(z) \}$ and $\Psi_\vartheta(x)=\{\psi_{\vartheta_2,1}(x),\psi_{\vartheta_2,2}(x),\cdots,\psi_{\vartheta_2,N}(x) \}$ are two independent networks. This architecture shares a similar idea to the DeepONet\cite{lu2021learning}, where $F_{\vartheta_1}$ is called the branch net while $\Psi_{\vartheta_2}$ is called the trunk net.
\end{remark}

It is important to note that no data is required to learn the probing function during the training process, i.e., it is an unsupervised learning method. Moreover, the method does not pose any conditions on the incident fields except on the wavenumber, 
which can also be relaxed by observing the fact that a neural network satisfying equation \eqref{equa:deepSystem} 
can also meet the following properties for a new wavenumber $\tilde{k}$ and $\forall \frac{\tilde{k}}{k}y_n, \frac{\tilde{k}}{k}z\in \Omega$: 
\begin{equation}
\begin{split}
     \langle \mathcal{NN}_{\vartheta}(\frac{\tilde{k}}{k}z,\hat{x}),\sum_{n=1}^{N}c_n\exp(-\text{i}\tilde{k}\hat{x}\cdot y_n)\rangle_{L^2(\Gamma)}
     =&\langle \mathcal{NN}_{\vartheta}(\frac{\tilde{k}}{k}z,\hat{x}),\sum_{n=1}^{N}c_n\exp(-\text{i}k\hat{x}\cdot \frac{\tilde{k}}{k}y_n)\rangle_{L^2(\Gamma)}\\
     \approx&\langle \exp(-\text{i}k\hat{x}\cdot\frac{\tilde{k}}{k}z),\sum_{n=1}^{N}c_n\exp(-\text{i}k\hat{x}\cdot \frac{\tilde{k}}{k}y_n)\rangle_{L^2(\mathbb{S}^1)}\\
     =& \langle \exp(-\text{i}\tilde{k}\hat{x}\cdot z), \sum_{n=1}^{N}c_n\exp(-\text{i}\tilde{k}\hat{x}\cdot y_n)\rangle_{L^2(\mathbb{S}^1)}.
\end{split}
\end{equation}
Thus, by denoting $\widetilde{\mathcal{NN}}_{\vartheta}(z,\hat{x}):=\mathcal{NN}_{\vartheta}(\frac{\tilde{k}}{k}z,\hat{x})$, $\widetilde{\mathcal{NN}}_{\vartheta}(z,\hat{x})$ can be a probing function for wavenumber $\tilde{k}$ with the sampling domain $\widetilde{\Omega}=:\{z|\frac{\tilde{k}}{k}z\in\Omega\}$. These properties also hold for the methods based on the finite space framework.
\begin{figure}[htp]
		\centering
	\includegraphics[width=0.7\linewidth]{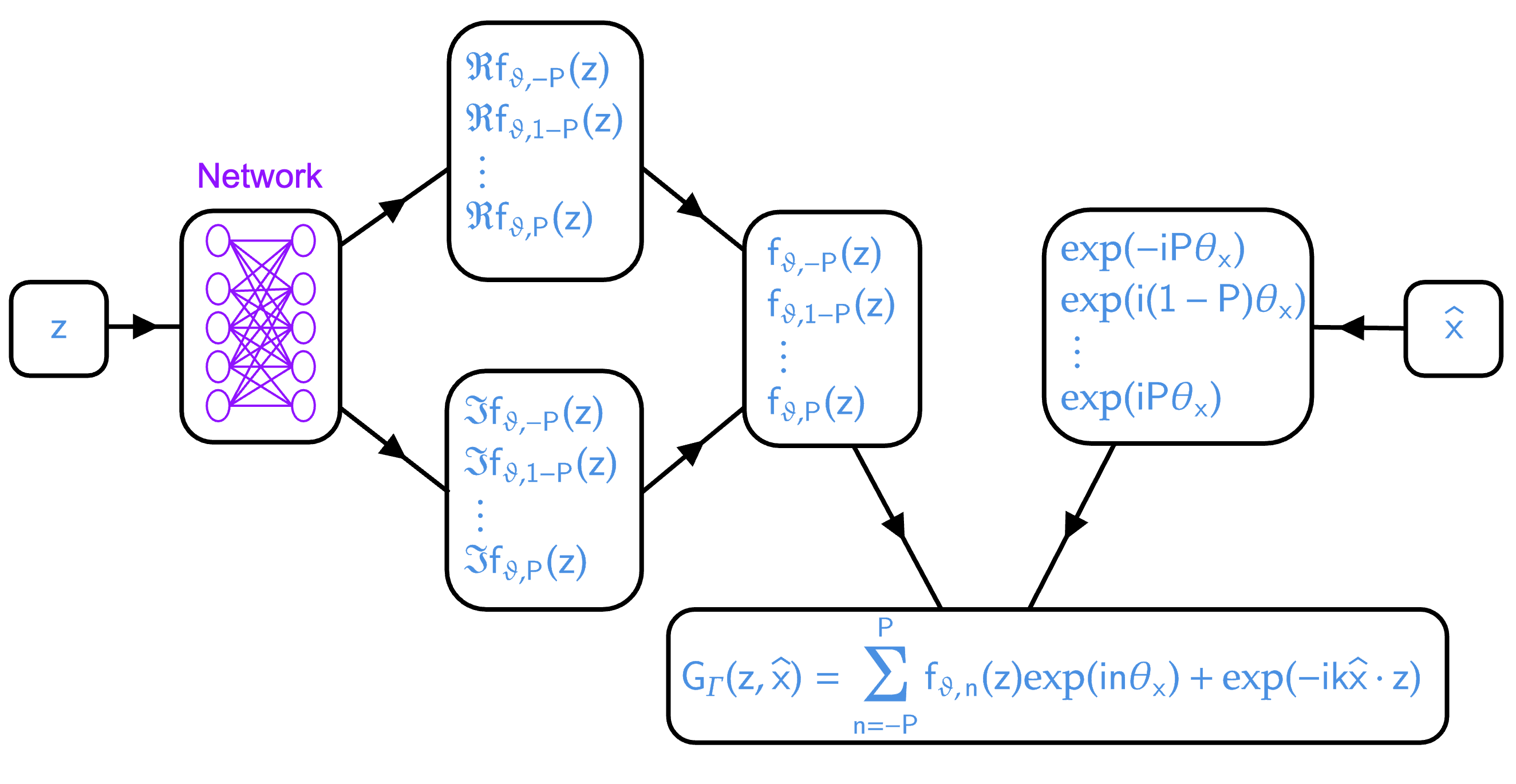}
	\caption{The architecture of the neural network used in the numerical experiments.}
	\label{fig: network}
\end{figure}

\textbf{Parallel implementation with multiple deep probing networks.} For the case where the sampling domain $\Omega$ is relatively large, employing a single neural network to learn the probing function for the whole sampling domain $\Omega$ may face some optimization challenges despite the universal approximation ability of the neural networks. To overcome this limitation, we propose to divide the whole sampling domain $\Omega$ into several smaller subdomains $\{\Omega_i\}_{i=1}^K$ with $\Omega = \cup_{i=1}^K \Omega_i$, and introduce $K$ deep probing networks $\{\mathcal{NN}_{\vartheta_i}(z,\hat{x}), z\in \Omega_i\}_{i=1}^K$. Then we suggest the following loss function corresponding to \eqref{equa:lossfunc} for the multiple deep probing networks:
\begin{equation}
    Loss(\vartheta_i) = \frac{1}{ML}\sum_{m=1}^{M}\sum_{l=1}^{L}\bigg|\frac{|\Gamma|}{Q}\sum_{q=1}^{Q}\mathcal{NN}_{\vartheta_i}(z_l,\hat{x}_q)\overline{v_m^\delta(\hat{x}_q)}-2\pi\sum_{n=1}^{N}\overline{c_{nm}}J_0(k|z_l-y_{nm}|)\bigg|^2, \quad i=1,2,\cdots,K,
\end{equation}
where points $\{z_l\}_{l=1}^{L}$ are randomly chosen from the uniform distribution of $\Omega_i$ in each iteration, whereas $\{y_{nm}\}$ are still randomly chosen from the uniform distribution of $\Omega$. We notice that each network is trained independently of the other, thus the multiple networks can be trained in parallel.

\section{Numerical Experiments}
\label{sec:numeri}
In this section, we present several representative numerical experiments to illustrate and compare the performance of different methods proposed in this paper. We consider two different configurations for receivers to measure the far-field data. In the first configuration, 100 receivers are uniformly located between the angle interval $[-\frac{2}{5}\pi,\frac{2}{5}\pi]$ as shown in Fig.\ref{fig:Config1}.  In the second configuration, 90 receivers are uniformly located in the angle intervals $[-\frac{1}{8}\pi,\frac{1}{8}\pi]\cup [(\frac{2}{3}-\frac{1}{8})\pi,(\frac{2}{3}+\frac{1}{8})\pi]\cup[(-\frac{2}{3}-\frac{1}{8})\pi,(-\frac{2}{3}+\frac{1}{8})\pi]$ as shown in Fig.\ref{fig:Config2}.  
\begin{figure}[htp]
	\centering
	\begin{subfigure}{0.4\textwidth}
		\centering
		\includegraphics[width=0.9\linewidth]{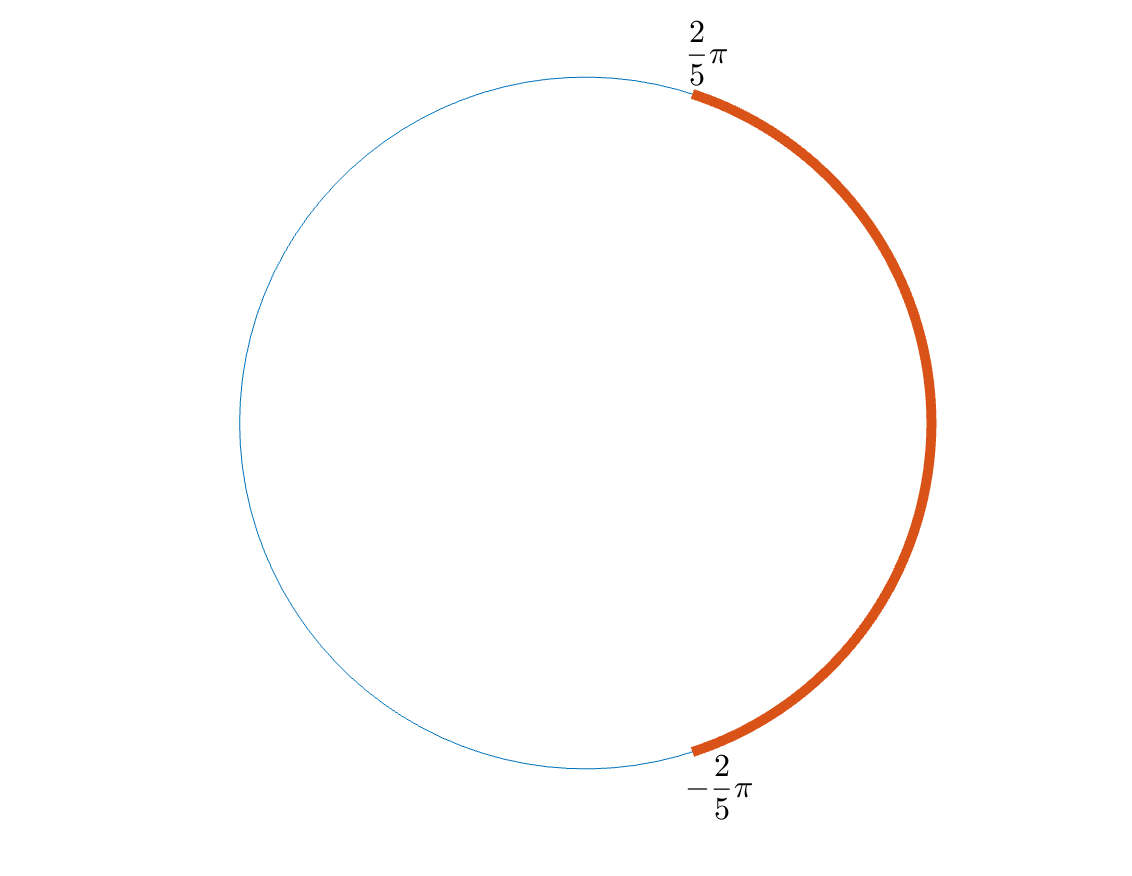}
		\caption{Configuration \uppercase\expandafter{\romannumeral1}}
		\label{fig:Config1}
	\end{subfigure}
		\begin{subfigure}{0.4\textwidth}
		\centering
		\includegraphics[width=0.95\linewidth]{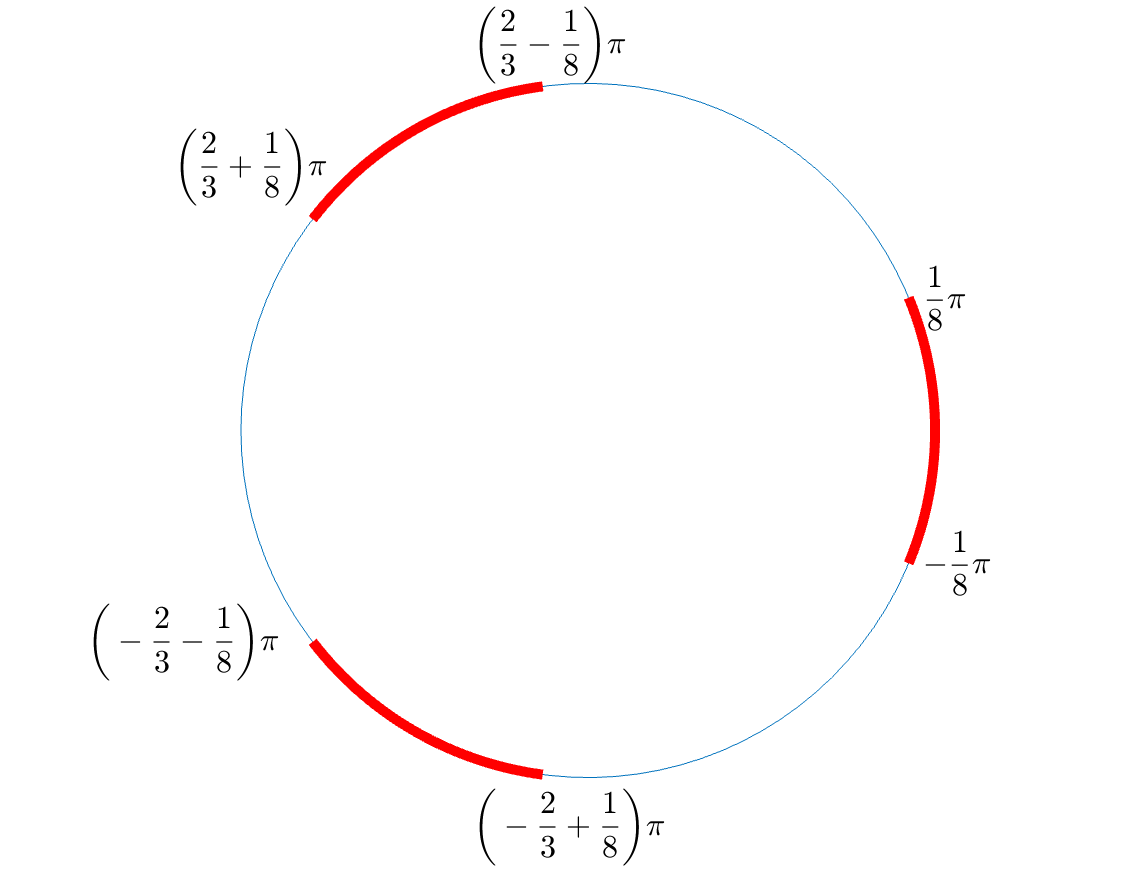}
		\caption{Configuration \uppercase\expandafter{\romannumeral2}}
		\label{fig:Config2}
	\end{subfigure}
	\caption{Two different configurations considered in the numerical experiments, where the highlighted red parts denote the measurement area $\Gamma$.}
	\label{fig:Config}
\end{figure}

The wavenumber is chosen as $k=8$ and the sampling domain is $\Omega=[-1,1]\times[-1,1]$. The noisy data $u^\infty_\delta$ are generated point-wisely by 
\begin{equation}
	u^\infty_\delta(\hat{x})=u^\infty(\hat{x})+\delta(\eta_r(\hat{x})+\text{i}\eta_i(\hat{x}))\frac{\Vert u^\infty\Vert_{L^2(\Gamma)}}{|\Gamma|^{1/2}}  \quad \hat{x} \in \Gamma,
\end{equation}
where both $\eta_r(\hat{x})$ and $\eta_i(\hat{x})$  follow the standard normal distribution, and $\delta$ is the noise level.

We will employ the following methods with specified details to compute the index functions:
\begin{itemize}
	\item \textbf{$G^\infty$+Full Data}: As a comparison, we compute the index function with full-aperture data by using the classical DSM, i.e., employing $G^\infty(z,\hat{x})$ as the probing function. 
	
	\item \textbf{$G^\infty$+Partial Data}: In this method, we directly employ $G^\infty(z,\hat{x})$ as the probing function to compute the index function with limited-aperture data measured from $\Gamma$.
	
	\item \textbf{Deep Probing Network}: This method employs the deep probing network proposed in Section \ref{sec:deeplearning} to construct the probing function over the measurement curves $\Gamma$. We employ the network architecture shown in Fig.\ref{fig: network} with $P=20$ to approximate the probing function. A fully connected network with 6 layers $[2,200,200,200,200,4P+2]$ and $ReLU$ activation function is used. In Algorithm \ref{Algor:learning}, we set $M=400,N=3,L=400,N_{\text{ite}}=5000, \lambda=0.05$, as well as $Q=100$ for configuration \uppercase\expandafter{\romannumeral1} and $Q=90$ for configuration \uppercase\expandafter{\romannumeral2}. The Adam optimizer is used to update the parameters of the neural network. The learning rate starts at 0.005 and decreases by a factor of 0.9 every 100 iterations. 

	\item \textbf{FFSM$\mathtt{m}$}: This method employs the finite Fourier space method proposed in Section \ref{sec:FFSM} to construct the probing function over the measurement curves $\Gamma$. We set $P=20$ in equation \eqref{equa:FFS} for both trial and testing spaces. We use the Tikhonov regularization $(\sigma I +\mathbb{A}^{*}\mathbb{A})^{-1}\mathbb{A}^{*}$ with $\sigma=0.1^{\mathtt{m}}$ and we will present the results with different choices of $\mathtt{m}$.
	
	\item \textbf{FSSM$\mathtt{m}$}: This method employs the finite source space method proposed in section \ref{sec:FSSM} to construct the probing function over the measurement surves $\Gamma$. We set $P=20$ for the trial space \eqref{equa:fssmTria}. The source points $\{y_j\}_{j=1}^{M}$ are equispaced grid points in $\Omega$ with $M=20^2$ for the testing space \eqref{equa:fssmTest}. 
We use the Tikhonov regularization $(\sigma I +\mathbb{A}^{*}\mathbb{A})^{-1}\mathbb{A}^{*}$ with $\sigma=0.1^{\mathtt{m}}$ and 
present the results with different choices of $\mathtt{m}$.
\end{itemize}
When ${N_{\text{inc}}}>1$ incidences are employed, we compute the index function averagely:
\begin{equation}
    \mathcal{I}_{\text{ave}}(z)=\frac{1}{N_{\text{inc}}}\sum_{j=1}^{N_{\text{inc}}}\mathcal{I}_j(z),
\end{equation}
where $\mathcal{I}_j(z)$ denotes the index function corresponding to the $j_{\text{th}}$ incidence.
For all the methods, the presented index functions are further normalized as
\begin{equation}
    \hat{\mathcal{I}}(z)=\frac{\mathcal{I}(z)}{\max_{z\in\Omega}\mathcal{I}(z)},\quad z\in \Omega.
\end{equation}

\begin{figure}[htp]
      \centering
	 \begin{subfigure}{0.85\textwidth}
		\centering
		 \includegraphics[width=\linewidth]{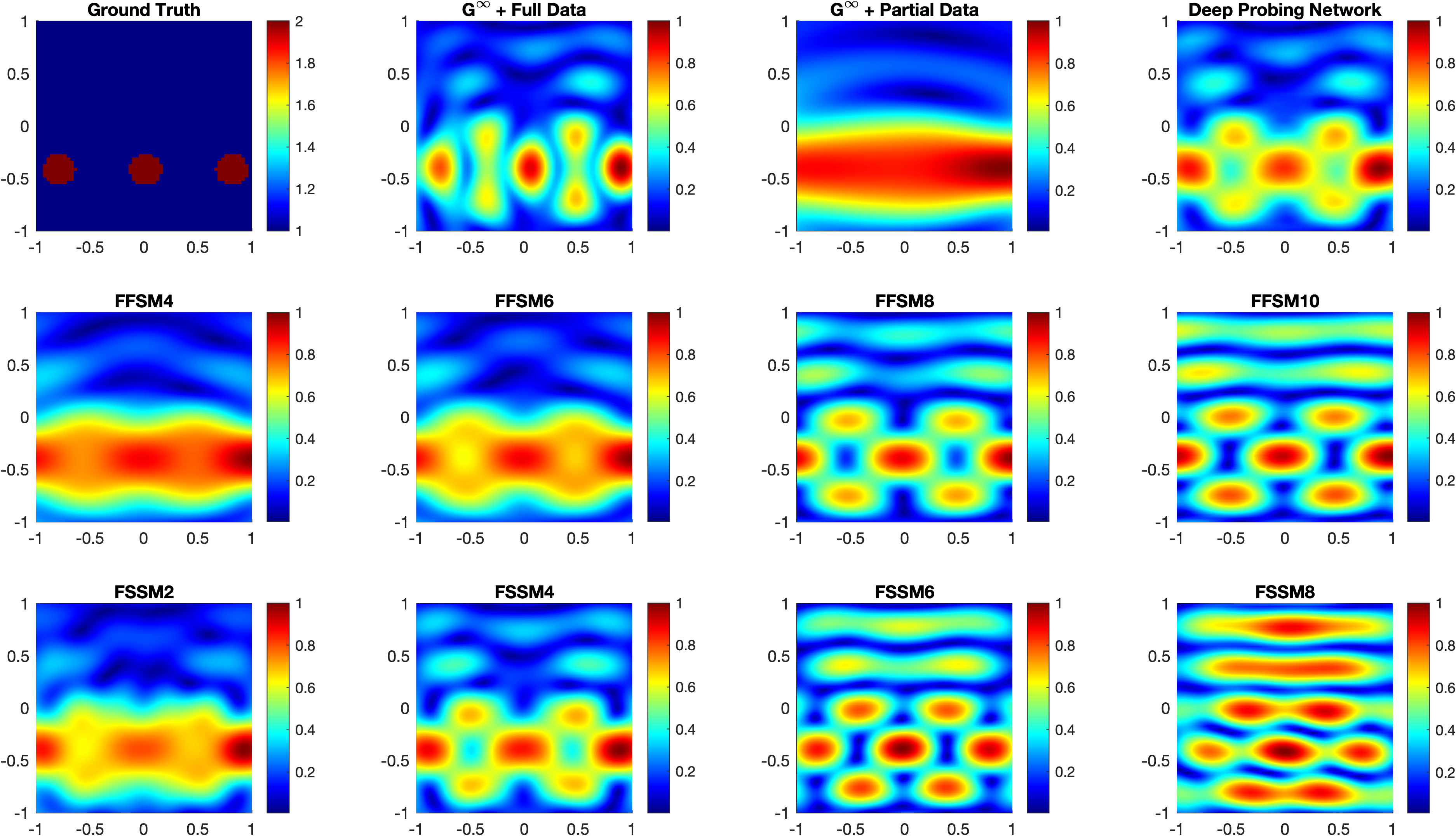}
		\caption{Reconstructions for \textbf{Example 1.1,} with noise level $\delta=1\%$}
  \vspace{0.2cm}
		\label{fig:exam6111}
	\end{subfigure}
	\begin{subfigure}{0.85\textwidth}
		\centering
		\includegraphics[width=\linewidth]{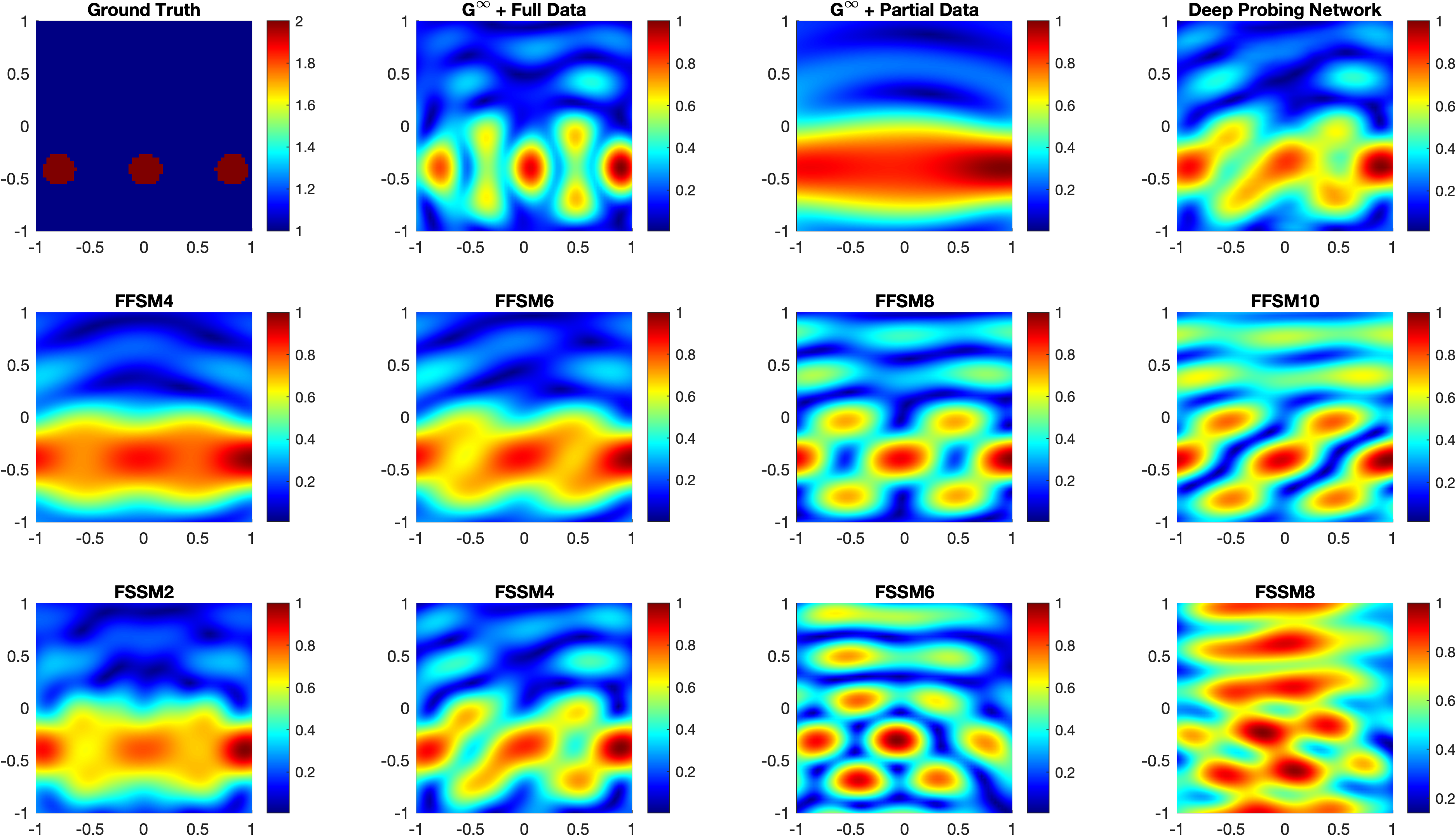}
		\caption{Reconstructions for \textbf{Example 1.1.} with noise level $\delta=5\%$}
		\label{fig:exam6115}
	\end{subfigure}
	\caption{Reconstructions for \textbf{Example 1.1,} with different noise levels.}
 \vspace{-0.5cm}
\end{figure}
\subsection{Numerical results for Configuration \uppercase\expandafter{\romannumeral1}}\label{Config1}

We first present two numerical examples for Configuration \uppercase\expandafter{\romannumeral1} shown in Fig.\ref{fig:Config1}.

\textbf{Example 1.1.}  In this example, three circles with radius $0.15$ located at $(-0.8,-0.4), (0,-0.4)$ and $(0.8,-0.4)$ are used to simulate inhomogeneous scatterers with refractive index 2.0. One incidence plane wave $u^i(x)=e^{\text{i}kx\cdot d}$ with $d=(1,0)$ is employed.

The reconstructions for this example with noise level $\delta=1\%$ are presented in Fig.\ref{fig:exam6111}, where the first image in the first row shows the exact image. It can be seen that the classical DSM, i.e., using $G^\infty(z,\hat{x})$ as the probing function, with full data, can provide a pretty satisfactory approximation for the locations of the scatterers and can distinguish the scatterers very well. However, due to the ``shadow region" phenomenon, although ``$G^\infty+$Partial Data" can provide a reasonable approximation for the vertical locations of the scatterers, it can only offer low-resolution reconstruction along the horizontal direction and can not distinguish and separate the three scatterers. Other figures show the reconstructions by different methods proposed in this work with limited-aperture data. From the last figure in the first row, we can see that the deep probing network can provide a much sharper reconstruction than ``$G^\infty+$Partial Data" and can distinguish the scatterers very well, which means that it can break the limitation of ``shadow region" to some extent. The other rows show the reconstructions by the finite Fourier space method (FFSM) and the finite source space method (FSSM) with different regularization parameters. It can be seen that with a proper choice of the regularization parameter, all these methods are also able to provide reasonable reconstructions. Specifically, the method FFSM achieves the best results with $\sigma=0.1^8$ among the parameter set $\{0.1^4,0.1^6,0.1^8,0.1^{10}\}$, and the method FSSM achieves the best results with $\sigma=0.1^4$ among the parameter set $\{0.1^2,0.1^4,0.1^6,0.1^8\}$. For large $\sigma$ (corresponding to images in the left), the resolution of the reconstructions by these methods is low and tends to behave similarly to ``$G^\infty+$Partial Data". For small $\sigma$,  these methods can not provide a more reliable approximation for the locations of the scatterers.

\begin{figure}[htp]
		\centering
	\includegraphics[width=0.7\linewidth]{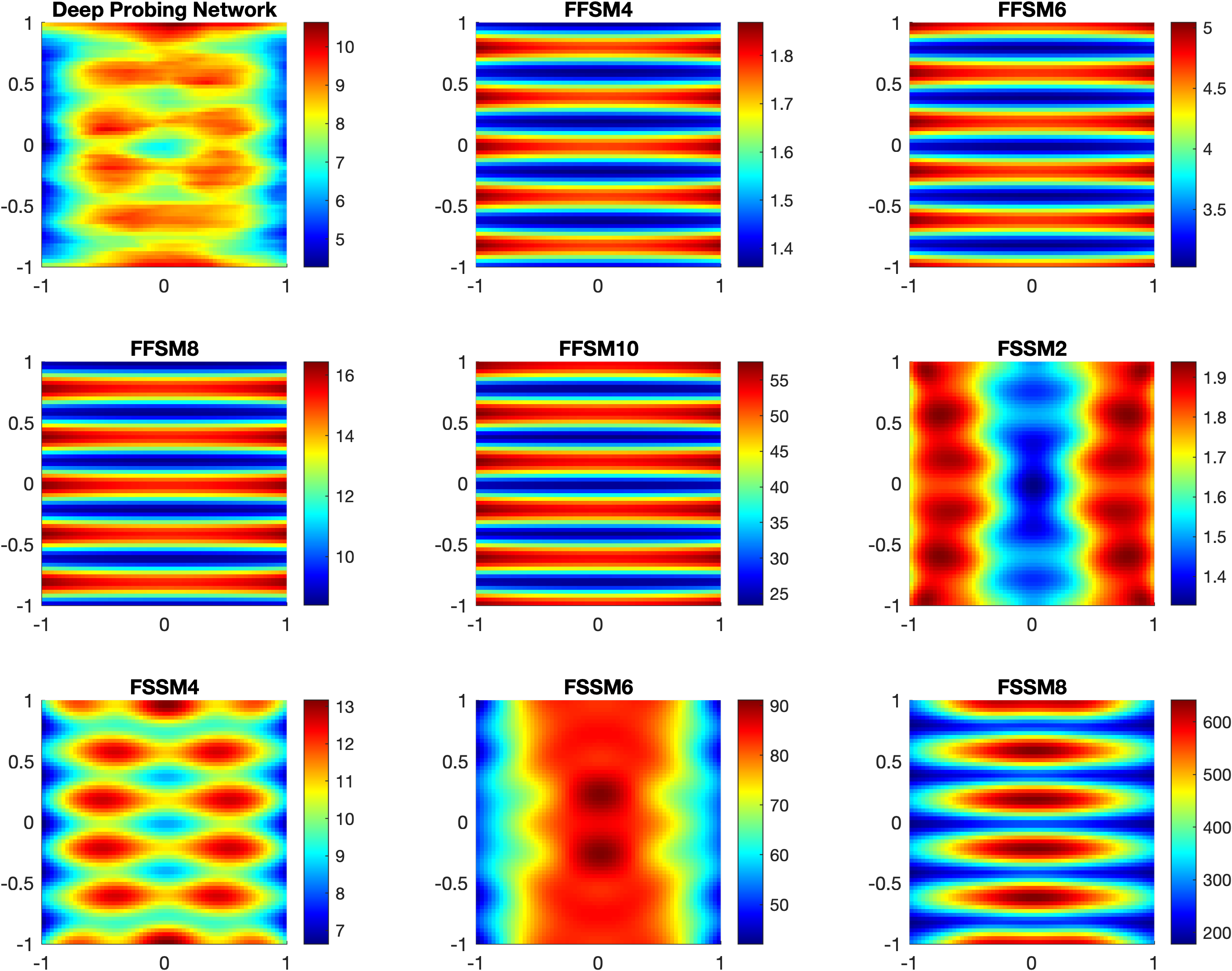}
	\caption{The relative norm function $RN(z)=\frac{\Vert G_\Gamma(z,\cdot)\Vert_{L^2(\Gamma)}}{\Vert G^\infty(z,\cdot)\Vert_{L^2(\Gamma)}}$ of the probing functions $G_\Gamma(z,\hat{x})$ constructed by different methods.}
	\label{fig: ProbingFunc}
\end{figure}

 To further test the robustness of different methods, we now consider the data with larger noise, $\delta=5\%$, and the recovered images are shown in Fig.\ref{fig:exam6115}. As the probing function $G^\infty(z,\hat{x})$ for classical DSM has relatively small norm and is mainly dominated by low-frequency modes if $k|z|$ is not too large, the reconstructions by ``$G^\infty+$Full Data" and ``$G^\infty+$Partial Data" with $5\%$ noise level are almost the same as those with $1\%$ noise level. However, as the lack of data makes the problem severely ill-posed,  in high noise level cases, we can see some distortions of the reconstructions by the deep probing network and other two methods with small regularization parameter $\sigma$. With large $\sigma$, we notice that the two regularization methods are stable to high-level noise but meet the limitation of low-resolution reconstruction. In Fig.\ref{fig: ProbingFunc}, we present the relative norm function $RN(z)=\frac{\Vert G_\Gamma(z,\cdot)\Vert_{L^2(\Gamma)}}{\Vert G^\infty(z,\cdot)\Vert_{L^2(\Gamma)}}$ of the probing functions $G_\Gamma(z,\hat{x})$ constructed by different methods. We can see that the norm of the probing function constructed by the deep probing network is around 5 to 10 times larger than the norm of $G^\infty(z,\hat{x})$. For the methods FFSM and FSSM, small $\sigma$ leads to the extremely large norm of the constructed probing function, which can then make the first error component in \eqref{equa:errorest} large as discussed in Lemma \ref{lemma:41}. Probing functions constructed by relatively large $\sigma$ have a relatively small norm, which can make the first error component in \eqref{equa:errorest} small but at the same time, the third error component in \eqref{equa:errorest} would be very large. Thus, to achieve high-resolution and reliable reconstructions, choosing a proper regularization parameter for the two methods is essential. An attractive advantage of the deep probing network is that it can escape the need to select the regularization parameter while the training process requires much more computational effort.

\begin{figure}[htp]
\centering
	\begin{subfigure}{0.9\textwidth}
		\centering
		\includegraphics[width=\linewidth]{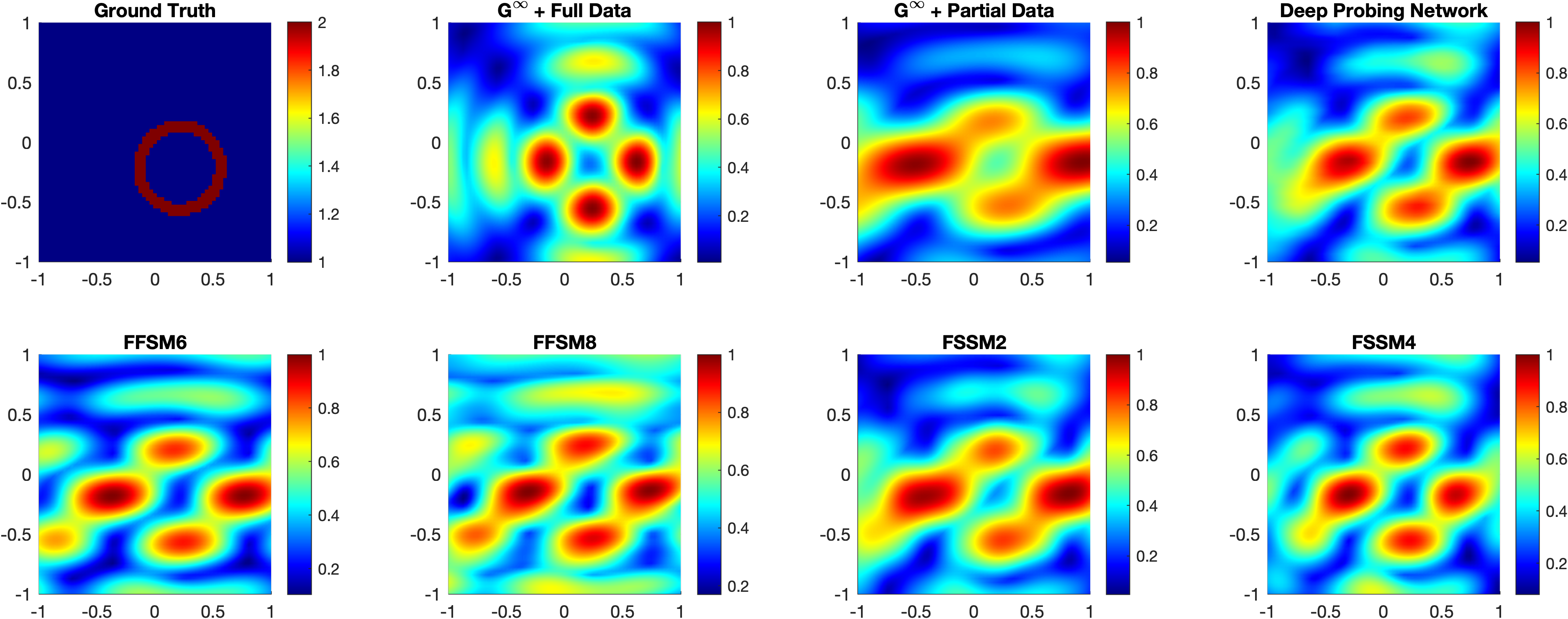}
		\caption{Reconstructions for \textbf{Example 1.2.} with noise level $\delta=1\%$}
   \vspace{0.5cm}
	\end{subfigure}
	\begin{subfigure}{0.9\textwidth}
		\centering
		\includegraphics[width=\linewidth]{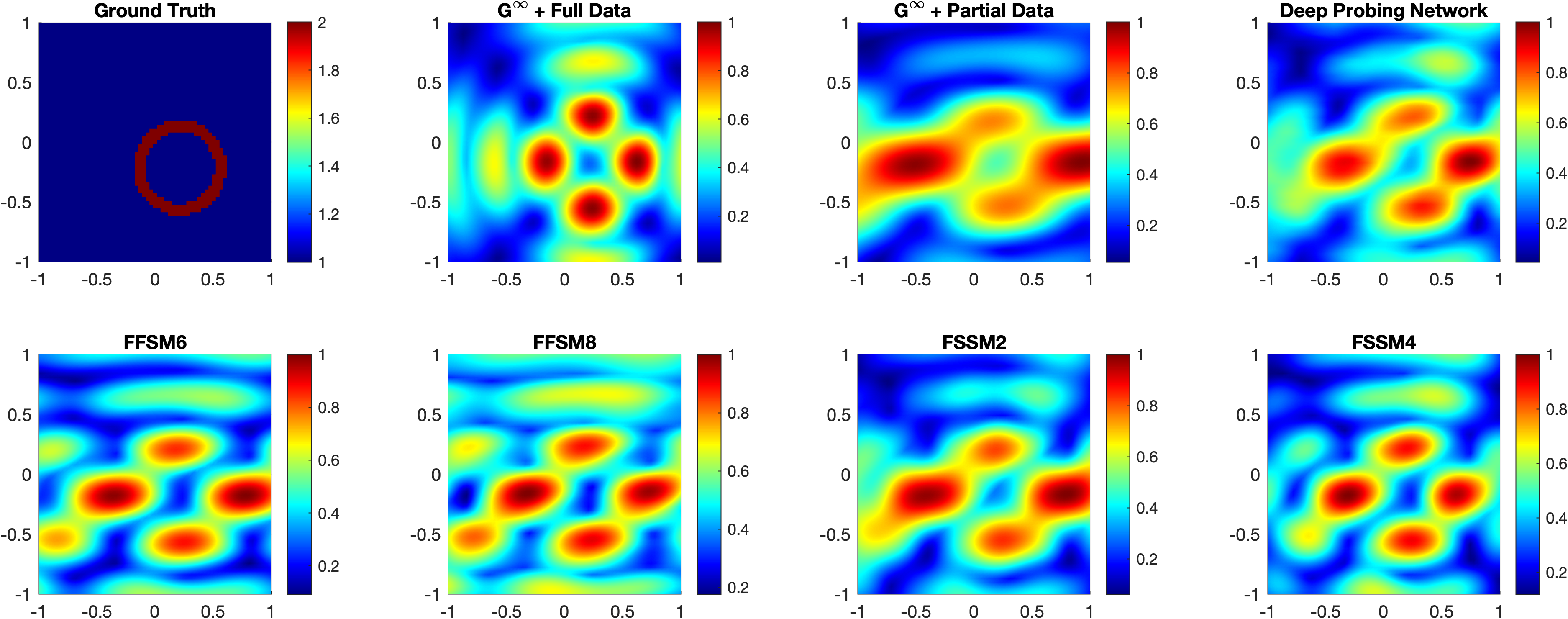}
		\caption{Reconstructions for \textbf{Example 1.2.} with noise level $\delta=5\%$}
	\end{subfigure}
	\caption{Reconstructions for \textbf{Example 1.2.} with different noise levels.}
	\label{fig:Example612}
\end{figure}

\textbf{Example 1.2.}  In this example, a scatterer with refractive index 2.0  is simulated by a ring located at the $(0.2,-0.2)$, with the inner and outer radius being 0.3 and 0.4, respectively. We employ two incidence plane waves with directions $d_1=(1,0)$ and $d_2=(0,1)$, respectively.

Similar to \textbf{Example 1.1.}, the recovered images presented in Fig.\,\ref{fig:Example612} show that the ``$G^\infty+$Partial Data" still faces the limitation of ``shadow region" with the resolution along the horizontal direction is low. The proposed deep probing network can alleviate the limitation stably. With the proper choice for the regularization parameter, the methods FFSM and FSSM can also provide better reconstructions than ``$G^\infty+$Partial Data", and the method FSSM with $\sigma=0.1^4$ seems to achieve the best results among the presented recovered images with limited-aperture data for this example.

\subsection{Numerical results for Configuration \uppercase\expandafter{\romannumeral2}}

We now present two numerical examples for Configuration \uppercase\expandafter{\romannumeral2} shown in Fig.\ref{fig:Config2}.

\textbf{Example 2.1.}  In this example, two circles with radius $0.15$ located at $(-0.6,-0.6)$ and $ (-0.2,-0.2)$ are used to simulate inhomogeneous scatterers with refractive index 2.0. One incidence plane wave $u^i(x)=e^{\text{i}kx\cdot d}$ with $d=(1,0)$ is employed.  The reconstructions for this example with different noise levels are presented in Fig.\ref{fig.exam621}.

\begin{figure}[htp]
\centering
	\begin{subfigure}{0.9\textwidth}
		\centering
		\includegraphics[width=\linewidth]{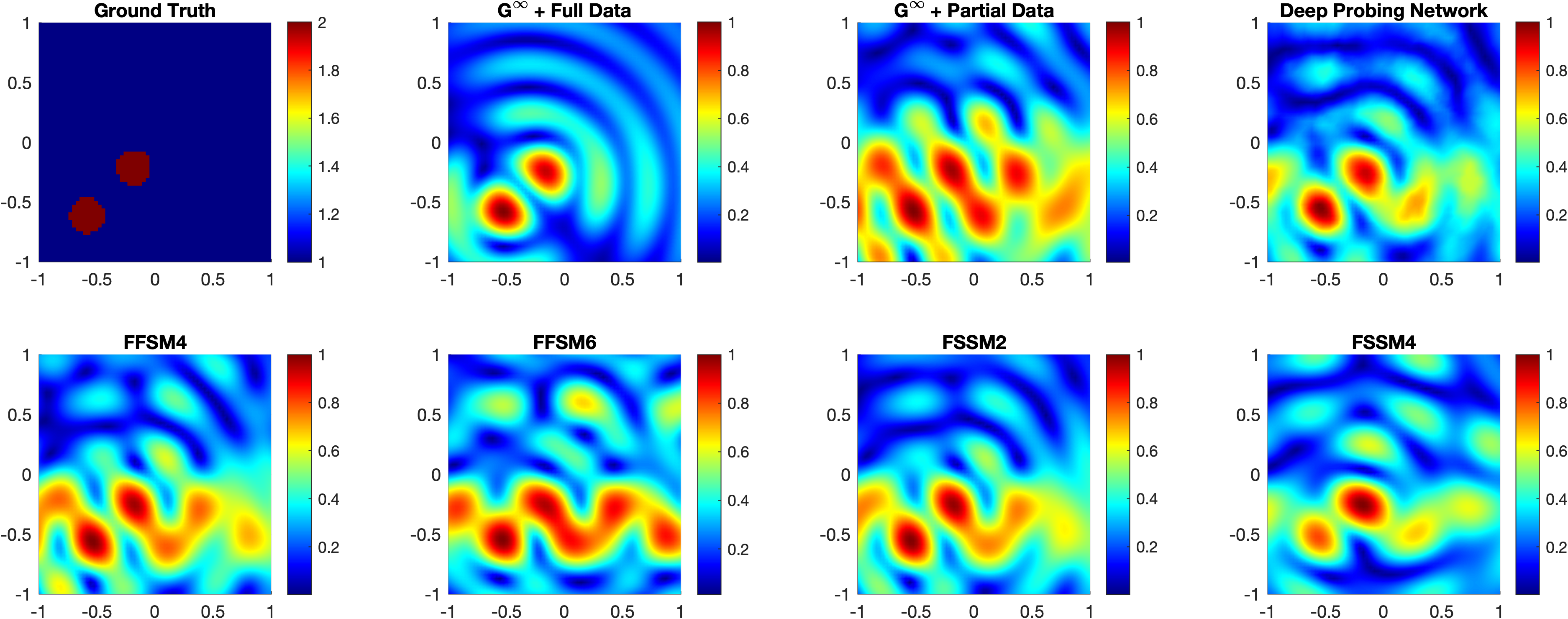}
		\caption{Reconstructions for \textbf{Example 2.1.} with noise level $\delta=1\%$}
		\label{fig: Logrestic Regression}
   \vspace{0.5cm}
	\end{subfigure}
	\begin{subfigure}{0.9\textwidth}
		\centering
		\includegraphics[width=\linewidth]{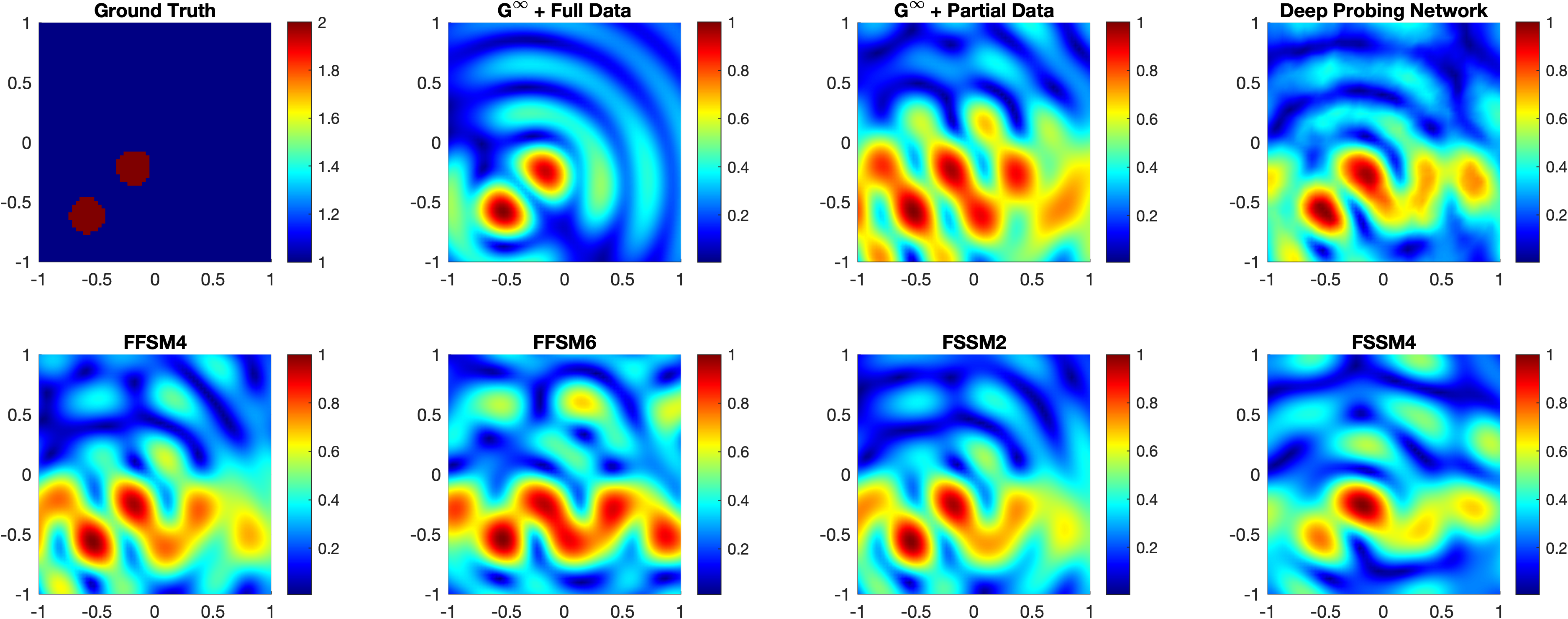}
		\caption{Reconstructions for \textbf{Example 2.1.} with noise level $\delta=5\%$}
		\label{fig: Logrestic Regression}
	\end{subfigure}
	\caption{Reconstructions for \textbf{Example 2.1.} with different noise levels.}
	\label{fig.exam621}
\end{figure}

With only one incidence, the DSM with full data can distinguish the two close scatterers clearly and stably for different noise levels. However, despite the index function computed by ``$G^\infty+$Partial Data" can take large values at the domain where the scatterers occupy, there are some distortions near the scatterers, which prevent us from accurately approximating the sizes and shapes of the scatterers. By using the probing function trained from the deep probing network, the computed index function can significantly achieve sharper and more accurate approximation for the locations and sizes of the scatterers. The performance of methods FFSM and FSSM with the regularization parameters $\{0.1^2,0.1^4\}$ seems to be not as good as that of the deep probing network, while with proper regularization parameter, they can also achieve some improvements over ``$G^\infty+$Partial Data".

\textbf{Example 2.2.}  In this example, the inhomogeneous scatterers are simulated by two rectangles. Three incidence plane waves with directions $d_1=(1,0),d_2=(-\frac{1}{2},\frac{\sqrt{3}}{2})$ and $d_3=(-\frac{1}{2},-\frac{\sqrt{3}}{2})$ are employed to generate the corresponding far-field data.  The reconstructions for this example with different noise levels are presented in Fig.\ref{fig: exam622}.

\begin{figure}[hpt]
\centering
	\begin{subfigure}{0.9\textwidth}
		\centering
		\includegraphics[width=\linewidth]{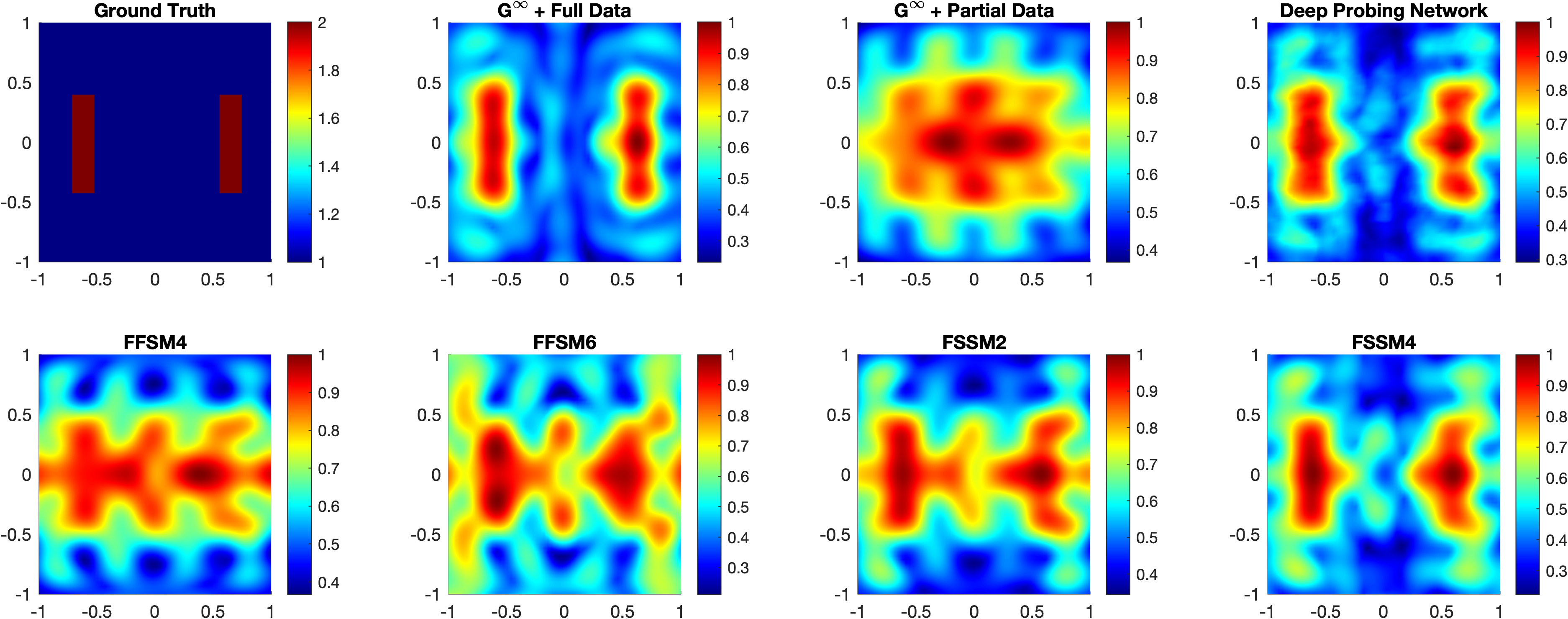}
		\caption{Reconstructions for \textbf{Example 2.2.} with noise level $\delta=1\%$}
		\label{fig: exam6221}
   \vspace{0.5cm}
	\end{subfigure}
	\begin{subfigure}{0.9\textwidth}
		\centering
		\includegraphics[width=\linewidth]{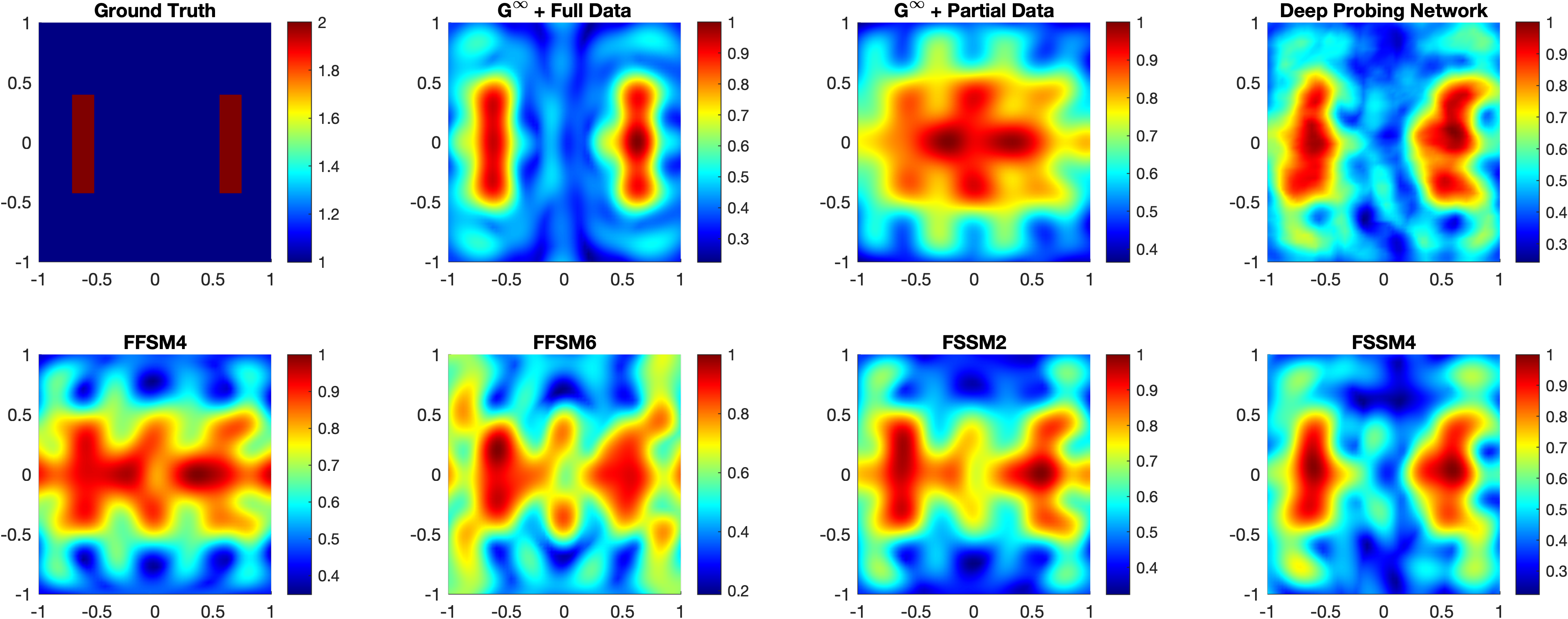}
		\caption{Reconstructions for \textbf{Example 2.2.} with noise level $\delta=5\%$}
		\label{fig: exam62210}
	\end{subfigure}
	\caption{Reconstructions for \textbf{Example 2.2.} with different noise levels.}
	\label{fig: exam622}
\end{figure}

As shown in Fig.\ref{fig: exam622}, the DSM with full-aperture data from only three incidences can very accurately and robustly approximate the scatterers' shapes, locations, and sizes. However, despite its robustness for high noise levels, the DSM with limited-aperture data cannot distinguish the two well-separated scatterers. With noise level $\delta=1\%$, the deep probing network can provide a quite satisfactory reconstruction for this example with limited-aperture data. An accurate result for this example can also be obtained by FSSM with regularization parameter $\sigma=0.1^4$. With regularization parameter $\sigma=0.1^6$, we can see that the method FFSM can also achieve better results than ``$G^\infty+$Partial Data", but they are still not very accurate compared to the deep probing network and ``FSSM4". With high noise level $\delta=5\%$,  we can see some effects of the high-level noise on the recovered shapes of the scatterers by the deep probing network and FSSM4, while the reconstructions are still reasonable and can distinct the two scatterers. Method FFSM with $\sigma=0.1^6$ can also provide better reconstruction than ``$G^\infty+$Partial Data" under noise level $\delta=5\%$. For large $\sigma$, methods FFSM and FSSM are very robust with the sacrifice on the resolution. Overall, with limited-aperture data, the deep probing network and FSSM with $\sigma=0.1^4$ achieve the best results for this example.

\section{Concluding remarks and future work}
\label{sec:conclu}
In this work, we have studied the DSMs for inverse scattering problems under highly challenging conditions, where the data are measured from limited-aperture receivers with only one or a few incident waves employed. The classical probing function is shown to be able to provide low-resolution approximation with limited-aperture data. To break this limitation, we propose a finite space framework with two specified methods and a deep learning scheme to construct effective probing functions for the DSMs with limited-aperture measurement. In addition, the partial measurement area can be very general. Several numerical experiments are presented to illustrate and compare different methods proposed in this paper. In addition, the proposed methods can be naturally extended to 3-dimensional space and potentially extended to the direct sampling methods for other inverse problems\cite{ito2013direct,chow2022direct,chow2014direct,chow2015direct,chow2021direct,chow2021directRadon} with limited data. 

In conjunction with our present work, several promising directions for future exploration exist. For the methods based on the finite space framework, in addition to the choices of the trail space and the testing space, another important research direction is to consider more suitable regularization schemes to solve the ill-posed linear system involved in this framework as well as to study the corresponding error analysis. In the numerical experiments of this paper, we solve \eqref{equa:FEMf2} with the same regularization parameter for all $z\in\Omega$, while considering the right side of this equation, it is reasonable to introduce a $z-$dependent parameter function $\sigma(z)$. The performance of the deep learning scheme can also be further improved by designing a more sophisticated network architecture or a more proper loss function and learning scheme. In addition, note that the learned probing function in the proposed method may need to be revised under specific noise levels. Thus, an interesting research direction is to learn a noise-dependent probing function with the form $G_\Gamma(z,\hat{x},\delta)$ where $\delta$ refers to the noise level.

\bibliography{bibfile}

\begin{thebibliography}{10}

\bibitem{adler2017solving}
J.~Adler and O.~{\"O}ktem.
\newblock Solving ill-posed inverse problems using iterative deep neural
  networks.
\newblock {\em Inverse Problems}, 33(12):124007, 2017.

\bibitem{ahn2020fast}
C.~Y. Ahn, S.~Chae, and W.-K. Park.
\newblock Fast identification of short, sound-soft open arcs by the
  orthogonality sampling method in the limited-aperture inverse scattering
  problem.
\newblock {\em Appl. Math. Lett.}, 109:106556, 2020.

\bibitem{ahn2014study}
C.~Y. Ahn, K.~Jeon, Y.-K. Ma, and W.-K. Park.
\newblock A study on the topological derivative-based imaging of thin
  electromagnetic inhomogeneities in limited-aperture problems.
\newblock {\em Inverse Problems}, 30(10):105004, 2014.

\bibitem{arridge2019solving}
S.~Arridge, P.~Maass, O.~{\"O}ktem, and C.-B. Sch{\"o}nlieb.
\newblock Solving inverse problems using data-driven models.
\newblock {\em Acta Numerica}, 28:1--174, 2019.

\bibitem{audibert2017generalized}
L.~Audibert and H.~Haddar.
\newblock The generalized linear sampling method for limited aperture
  measurements.
\newblock {\em SIAM J. Imaging Sci.}, 10(2):845--870, 2017.

\bibitem{audibert2022accelerated}
L.~Audibert, H.~Haddar, and X.~Liu.
\newblock An accelerated level-set method for inverse scattering problems.
\newblock {\em SIAM J. Imaging Sci.}, 15(3):1576--1600, 2022.

\bibitem{aussal2020data}
M.~Aussal, Y.~Boukari, and H.~Haddar.
\newblock Data completion method for the {H}elmholtz equation via surface
  potentials for partial cauchy data.
\newblock {\em Inverse Problems}, 36(5):055012, 2020.

\bibitem{bao2015inverse}
G.~Bao, P.~Li, J.~Lin, and F.~Triki.
\newblock Inverse scattering problems with multi-frequencies.
\newblock {\em Inverse Problems}, 31(9):093001, 2015.

\bibitem{bao2003numerical}
G.~Bao and J.~Liu.
\newblock Numerical solution of inverse scattering problems with
  multi-experimental limited aperture data.
\newblock {\em SIAM J. Sci. Comput.}, 25(3):1102--1117, 2003.

\bibitem{borges2017high}
C.~Borges, A.~Gillman, and L.~Greengard.
\newblock High resolution inverse scattering in two dimensions using recursive
  linearization.
\newblock {\em SIAM J. Imaging Sci.}, 10(2):641--664, 2017.

\bibitem{bulyshev2004three}
A.~Bulyshev, A.~Souvorov, S.~Y. Semenov, V.~Posukh, and Y.~Sizov.
\newblock Three-dimensional vector microwave tomography: theory and
  computational experiments.
\newblock {\em Inverse problems}, 20(4):1239, 2004.

\bibitem{cakoni2011linear}
F.~Cakoni, D.~Colton, and P.~Monk.
\newblock {\em The linear sampling method in inverse electromagnetic
  scattering}.
\newblock SIAM, 2011.

\bibitem{chen2013reverse}
J.~Chen, Z.~Chen, and G.~Huang.
\newblock Reverse time migration for extended obstacles: acoustic waves.
\newblock {\em Inverse Problems}, 29(8):085005, 2013.

\bibitem{chen2013reverse2}
J.~Chen, Z.~Chen, and G.~Huang.
\newblock Reverse time migration for extended obstacles: electromagnetic waves.
\newblock {\em Inverse Problems}, 29(8):085006, 2013.

\bibitem{chen2023solving}
J.~Chen, B.~Jin, and H.~Liu.
\newblock Solving inverse obstacle scattering problem with latent surface
  representations.
\newblock {\em Inverse Problems}, 2023.

\bibitem{chen2009subspace}
X.~Chen.
\newblock Subspace-based optimization method for solving inverse-scattering
  problems.
\newblock {\em IEEE Trans. Geosci. Remote Sens.}, 48(1):42--49, 2009.

\bibitem{chen2020review}
X.~Chen, Z.~Wei, M.~Li, and P.~Rocca.
\newblock A review of deep learning approaches for inverse scattering problems.
\newblock {\em Prog. Electromagn. Res.}, 167:67--81, 2020.

\bibitem{cheng1998unique}
J.~Cheng and M.~Yamamoto.
\newblock Unique continuation on a line for harmonic functions.
\newblock {\em Inverse Problems}, 14(4):869, 1998.

\bibitem{chow2021direct}
Y.~T. Chow, F.~Han, and J.~Zou.
\newblock A direct sampling method for simultaneously recovering inhomogeneous
  inclusions of different nature.
\newblock {\em SIAM J. Sci. Comput.}, 43(3):A2161--A2189, 2021.

\bibitem{chow2021directRadon}
Y.~T. Chow, F.~Han, and J.~Zou.
\newblock A direct sampling method for the inversion of the radon transform.
\newblock {\em SIAM J. Imaging Sci.}, 14(3):1004--1038, 2021.

\bibitem{chow2022direct}
Y.~T. Chow, F.~Han, and J.~Zou.
\newblock A direct sampling method for simultaneously recovering
  electromagnetic inhomogeneous inclusions of different nature.
\newblock {\em J. Comput. Phys.}, 470:111584, 2022.

\bibitem{chow2015direct}
Y.~T. Chow, K.~Ito, K.~Liu, and J.~Zou.
\newblock Direct sampling method for diffusive optical tomography.
\newblock {\em SIAM J. Sci. Comput.}, 37(4):A1658--A1684, 2015.

\bibitem{chow2014direct}
Y.~T. Chow, K.~Ito, and J.~Zou.
\newblock A direct sampling method for electrical impedance tomography.
\newblock {\em Inverse Problems}, 30(9):095003, 2014.

\bibitem{colton2019inverse}
D.~Colton and R.~Kress.
\newblock {\em Inverse Acoustic and Electromagnetic Scattering Theory},
  volume~93.
\newblock Springer, 4th edition, 2019.

\bibitem{dou2022data}
F.~Dou, X.~Liu, S.~Meng, and B.~Zhang.
\newblock Data completion algorithms and their applications in inverse acoustic
  scattering with limited-aperture backscattering data.
\newblock {\em J. Comput. Phys.}, 469:111550, 2022.

\bibitem{fu2009modified}
C.-L. Fu, Z.-L. Deng, X.-L. Feng, and F.-F. Dou.
\newblock A modified tikhonov regularization for stable analytic continuation.
\newblock {\em SIAM J. Numer. Anal.}, 47(4):2982--3000, 2009.

\bibitem{fu2008simple}
C.-L. Fu, F.-F. Dou, X.-L. Feng, and Z.~Qian.
\newblock A simple regularization method for stable analytic continuation.
\newblock {\em Inverse Problems}, 24(6):065003, 2008.

\bibitem{gao2022artificial}
Y.~Gao, H.~Liu, X.~Wang, and K.~Zhang.
\newblock On an artificial neural network for inverse scattering problems.
\newblock {\em J. Comput. Phys.}, 448:110771, 2022.

\bibitem{harris2020orthogonality}
I.~Harris and D.-L. Nguyen.
\newblock Orthogonality sampling method for the electromagnetic inverse
  scattering problem.
\newblock {\em SIAM J. Sci. Comput.}, 42(3):B722--B737, 2020.

\bibitem{hu2024residual}
H.-J. Hu, J.~Li, L.-Y. Xiao, Y.~Cheng, and Q.~H. Liu.
\newblock A residual fully convolutional network ({Res-FCN}) for
  electromagnetic inversion of high contrast scatterers at an arbitrary
  frequency within a wide frequency band.
\newblock {\em Inverse Problems}, 40(6):065008, 2024.

\bibitem{ito2012direct}
K.~Ito, B.~Jin, and J.~Zou.
\newblock A direct sampling method to an inverse medium scattering problem.
\newblock {\em Inverse Problems}, 28(2):025003, 2012.

\bibitem{ito2013direct}
K.~Ito, B.~Jin, and J.~Zou.
\newblock A direct sampling method for inverse electromagnetic medium
  scattering.
\newblock {\em Inverse Problems}, 29(9):095018, 2013.

\bibitem{ito2022least}
K.~Ito, Y.~Liang, and J.~Zou.
\newblock Least-squares method for recovering multiple medium parameters.
\newblock {\em Inverse Problems}, 38(12):125004, 2022.

\bibitem{CiCP-28-1746}
Y.~T.~Y. John~Xu, Zhi-QinZhang and Z.~Ma.
\newblock Frequency principle: Fourier analysis sheds light on deep neural
  networks.
\newblock {\em Commun. Comput. Phys.}, 28(5):1746--1767, 2020.

\bibitem{khoo2019switchnet}
Y.~Khoo and L.~Ying.
\newblock {SwitchNet}: a neural network model for forward and inverse
  scattering problems.
\newblock {\em SIAM J. Sci. Comput.}, 41(5):A3182--A3201, 2019.

\bibitem{kirsch2002music}
A.~Kirsch.
\newblock The music-algorithm and the factorization method in inverse
  scattering theory for inhomogeneous media.
\newblock {\em Inverse problems}, 18(4):1025, 2002.

\bibitem{langer2010investigation}
S.~Langer.
\newblock Investigation of preconditioning techniques for the iteratively
  regularized gauss--newton method for exponentially ill-posed problems.
\newblock {\em SIAM J. Sci. Comput.}, 32(5):2543--2559, 2010.

\bibitem{li2020nett}
H.~Li, J.~Schwab, S.~Antholzer, and M.~Haltmeier.
\newblock {NETT}: Solving inverse problems with deep neural networks.
\newblock {\em Inverse Problems}, 36(6):065005, 2020.

\bibitem{li2015recovering}
J.~Li, P.~Li, H.~Liu, and X.~Liu.
\newblock Recovering multiscale buried anomalies in a two-layered medium.
\newblock {\em Inverse Problems}, 31(10):105006, 2015.

\bibitem{li2013direct}
J.~Li and J.~Zou.
\newblock A direct sampling method for inverse scattering using far-field data.
\newblock {\em Inverse Probl. Imaging}, 7(3):757--775, 2013.

\bibitem{li2024reconstruction}
K.~Li, B.~Zhang, and H.~Zhang.
\newblock Reconstruction of inhomogeneous media by an iteration algorithm with
  a learned projector.
\newblock {\em Inverse Problems}, 2024.

\bibitem{li2020fourier}
Z.~Li, N.~Kovachki, K.~Azizzadenesheli, B.~Liu, K.~Bhattacharya, A.~Stuart, and
  A.~Anandkumar.
\newblock Fourier neural operator for parametric partial differential
  equations.
\newblock {\em arXiv preprint arXiv:2010.08895}, 2020.

\bibitem{liu2019data}
X.~Liu and J.~Sun.
\newblock Data recovery in inverse scattering: from limited-aperture to
  full-aperture.
\newblock {\em J. Comput. Phys.}, 386:350--364, 2019.

\bibitem{lu2021learning}
L.~Lu, P.~Jin, G.~Pang, Z.~Zhang, and G.~E. Karniadakis.
\newblock Learning nonlinear operators via {DeepONet} based on the universal
  approximation theorem of operators.
\newblock {\em Nat. Mach. Intell.}, 3(3):218--229, 2021.

\bibitem{lu2022comprehensive}
L.~Lu, X.~Meng, S.~Cai, Z.~Mao, S.~Goswami, Z.~Zhang, and G.~E. Karniadakis.
\newblock A comprehensive and fair comparison of two neural operators (with
  practical extensions) based on fair data.
\newblock {\em Comput. Meth. Appl. Mech. Eng.}, 393:114778, 2022.

\bibitem{lu2012unique}
S.~Lu, B.~Xu, and X.~Xu.
\newblock Unique continuation on a line for the {H}elmholtz equation.
\newblock {\em Applicable Analysis}, 91(9):1761--1771, 2012.

\bibitem{mager1976approach}
R.~D. Mager and N.~Bleistein.
\newblock {\em An approach to the limited aperture problem of physical optics
  Farfield inverse scattering}.
\newblock Tech. Report Ms-R-7704, University of Denver, Denver, CO, 1976.

\bibitem{ning2023direct}
J.~Ning, F.~Han, and J.~Zou.
\newblock A direct sampling-based deep learning approach for inverse medium
  scattering problems.
\newblock {\em Inverse Problems}, 40(1):015005, 2023.

\bibitem{ning2024direct}
J.~Ning, F.~Han, and J.~Zou.
\newblock A direct sampling method and its integration with deep learning for
  inverse scattering problems with phaseless data.
\newblock {\em arXiv preprint arXiv:2403.02584}, 2024.

\bibitem{persico2014introduction}
R.~Persico.
\newblock {\em Introduction to ground penetrating radar: inverse scattering and
  data processing}.
\newblock John Wiley \& Sons, 2014.

\bibitem{raissi2019physics}
M.~Raissi, P.~Perdikaris, and G.~E. Karniadakis.
\newblock Physics-informed neural networks: A deep learning framework for
  solving forward and inverse problems involving nonlinear partial differential
  equations.
\newblock {\em J. Comput. Phys.}, 378:686--707, 2019.

\bibitem{tanyu2022deep}
D.~N. Tanyu, J.~Ning, T.~Freudenberg, N.~Heilenk{\"o}tter, A.~Rademacher,
  U.~Iben, and P.~Maass.
\newblock Deep learning methods for partial differential equations and related
  parameter identification problems.
\newblock {\em Inverse Problems}, 39(10):103001, 2023.

\bibitem{van2021forward}
P.~M. van~den Berg.
\newblock {\em Forward and inverse scattering algorithms based on contrast
  source integral equations}.
\newblock John Wiley \& Sons, 2021.

\bibitem{wei2018deep}
Z.~Wei and X.~Chen.
\newblock Deep-learning schemes for full-wave nonlinear inverse scattering
  problems.
\newblock {\em IEEE Trans. Geosci. Remote Sens.}, 57(4):1849--1860, 2018.

\bibitem{ying2022analytic}
L.~Ying.
\newblock Analytic continuation from limited noisy matsubara data.
\newblock {\em J. Comput. Phys.}, 469:111549, 2022.

\bibitem{zhou2023neural}
M.~Zhou, J.~Han, M.~Rachh, and C.~Borges.
\newblock A neural network warm-start approach for the inverse acoustic
  obstacle scattering problem.
\newblock {\em J. Comput. Phys.}, 490:112341, 2023.

\bibitem{zinn1989optimisation}
A.~Zinn.
\newblock On an optimisation method for the full-and the limited-aperture
  problem in inverse acoustic scattering for a sound-soft obstacle.
\newblock {\em Inverse Problems}, 5(2):239, 1989.

\end{thebibliography}

\end{document}